\documentclass{article}
\usepackage{graphicx}
\usepackage{hyperref} 
\pdfoutput=1
\hypersetup{colorlinks=true, linkcolor=blue}
\usepackage[justification=raggedright,singlelinecheck=false]{caption}
\date{}

\usepackage{amsmath, amssymb, amsthm}
\usepackage{cleveref}
\crefname{proposition}{Proposition}{Proposition}
\usepackage{bm}
\usepackage{booktabs}
\usepackage{array}
\usepackage{geometry}

\usepackage{tabularx}
\usepackage{booktabs} 

\usepackage[utf8]{inputenc}
\usepackage[T1]{fontenc}
\usepackage{hyperref}
\hypersetup{
    colorlinks=true,
    linkcolor=blue,
    urlcolor=blue,
    citecolor=blue
}

\usepackage{hyperref}
\usepackage{float}
\usepackage{bbm}
\usepackage{mathrsfs}
\usepackage{tikz}
\usepackage{setspace}
\title{PARAMETRIX CONSTRUCTION AND INVERTIBILITY OF $\Delta_g + 1$}
\author{SHIH-YU CHANG}

\begin{document}

\maketitle
\begin{abstract}
Given a manifold $M$ with an asymptotically cylindrical end, we construct the parametrix for the shifted Laplace operator $\Delta_g + 1$ on the blown-up $b$-double space $M_b^2$. We will review the geometry of manifold with a cylindrical end and construct the inverse directly via separation of variables, which motivates the parametrix construction. Next, we review the mapping properties and show how to get elliptic regularity from the mapping properties of the parametrix. 

Through this explicit example, we hope this paper provides another reference for the theory of $b$-operators introduced by Melrose \cite{Mel93}.
\end{abstract}

\tableofcontents
\newpage                     

\section{Introduction}

The analysis of differential operators on noncompact manifolds requires specialized machinery to handle the lack of compactness at infinity. An example of this will be manifold with a cylindrical end, a geometry that appears naturally across geometric analysis. Melrose constructed the $b$-calculus to deal with such problems. For a comprehensive exposition of $b$-calculus and its foundational geometric applications, the reader is referred to the textbooks by Melrose \cite{Mel93, Mel96, Mel03} and notes by Grieser \cite{Gri01}. Additionally, for a treatment of pseudodifferential operators on manifolds, including detailed propositions on their boundedness on Sobolev spaces, Chapter~5 of Hintz's book \cite{Hin25} is also an excellent resource. 

Dyatlov and Li \cite{DyLi24} provided an introduction to the $b$-calculus framework using $\Delta_g$ on manifold with cylindrical ends as a motivating example. They cut out the zero to achieve invertibility. Here, the $+1$ shift introduces a strict positive spectral gap, which naturally eliminates zero mode obstructions.

In this paper, we focus on the special case $\Delta_g + 1$ to show how $b$-calculus is applied to tackle invertibility. In Section 1, we introduce the geometry of manifold with a cylindrical end. In Section 2, we discuss the positive definite method. We solve the problem using separation of variables in Section 3, but we cannot do separation of variables globally because of the compact core.
Our manifold is only asymptotically cylindrical ($\mathbb{R} \times B$) due to a 
compact core. Thus, the metric has a 
remainder $\mathcal{O}$:
\[
\Delta_g + 1 = \Delta_{B \times \mathbb{R}} + \mathcal{O}
\]
This restricts the separation of variables to a local level. To complete the global 
construction, we handle this remainder using parametrix construction in Section 4.

In Section 4, we construct the Schwartz kernel of the parametrix on the double space $M_b^2$. This requires a blow-up, the computational details in Section 3 illustrate why this blow-up is necessary. We also analyze the behavior of the parametrix in Section 4.

For Section 5, we will see the mapping properties of the parametrix, which will give us the elliptic regularity.

\subsection{Manifold with a Cylindrical End}

\subsubsection{Manifold Decomposition}
We start with a manifold $M$ with a cylindrical end. We then compactify the manifold by adding an infinity, making it a compact manifold with boundary. We are assuming our metric is exponentially close to the product metric.

\usetikzlibrary{calc}
\begin{center} 
\begin{tikzpicture}[line width=1.2pt]

    \draw (4, 0.5) -- (1, 0.5) 
        .. controls (0, 0.5) and (-0.5, 1.2) .. (-1.5, 1.2)
        .. controls (-3, 1.2) and (-3, -1.2) .. (-1.5, -1.2)
        .. controls (-0.5, -1.2) and (0, -0.5) .. (1, -0.5)
        -- (4, -0.5);

   \draw[cyan, dashed, line width=1.2pt] 
    (0.6, 0.5) .. controls (0.45, 0) .. (0.6, -0.5); 
  
   \draw[cyan, dashed, line width=1.2pt] 
    (1.0, 0.5) .. controls (0.85, 0) .. (1.0, -0.5);

  \node[cyan, anchor=north] at (1.5, -0.6) {patch (compact)};
  \node[red, anchor=north] at (5, -0.6) {cylindrical end\quad$\mathbb{R}_+ \times B$};

  \draw[->, >=stealth, line width=1pt] (2.0, 0.8) -- (3.5, 0.8);
  \node[anchor=south] at (2.75, 1.2) {\large $t$};
  \node[anchor=south] at (-2.75, 1) {\large $K$};

    \begin{scope}[shift={(-1.3,0)}]
        % 下方的弧線 (Smile)
        \draw (-0.6, 0.1) .. controls (-0.3, -0.4) and (0.3, -0.4) .. (0.6, 0.1);
        % 上方的弧線 (Frown)
        \draw (-0.45, -0.05) .. controls (-0.2, 0.25) and (0.2, 0.25) .. (0.45, -0.05);
    \end{scope}

    \draw[red, line width=1.5pt] (4, 0) ellipse (0.25 and 0.5);

\end{tikzpicture} 

\end{center}

The right part extending to infinity as $t \to \infty$. Topologically, this end is a product $\mathbb{R}_+ \times B$, where $B$ is the compact cross-section (in our specific example here, $B = \mathbb{S}^1$). Once we glue those two parts ($K$ and the cylindrical end) together, this \textcolor{cyan}{patching region} will be compact and smooth.

We will focus on the noncompact end. For the actual metric, we just gluing the part for the compact one which comes from the usual compact theory.

 \subsubsection{The Cross-section $B$}

In the context of an asymptotically cylindrical manifold, the cross-section (denoted as $B$) is the compact manifold that specifies the geometry of the infinite end. Geometrically, a manifold $M$ with a cylindrical end is diffeomorphic to a product space $[R, \infty)_t \times B $
where $t$ is the coordinate along the length of the cylinder and $B$ is a compact Riemannian manifold without boundary.

 If we imagine the manifold as a long tube extending to infinity, the cross-section is the slice obtained by taking a constant value of the coordinate $t$. The cross-section $B$ is compact, which allows for a discrete spectral decomposition of operators restricted to it.

The Laplacian on the cylinder $\mathbb{R} \times B$ splits as $\Delta = -\partial_t^2 + \Delta_{h}$ (${h}$ is the metric on $B$), where $\Delta_{h}$ is the Laplacian on the cross-section. The modes of the operator are determined by the eigenvalues $\lambda_j^2$ of $\Delta_{h}$. Each eigenvalue of the cross-section corresponds to a specific decay or growth rate for functions in the cylindrical end.

 In specific example where $B = \mathbb{S}^1$, the cross-section is simply a circle. If the manifold were a 3-D solid tube, the cross-section would be a 2-D surface, such as a sphere or a torus. Regardless of the dimension, the compactness of $B$ is the essential property that enables the Fredholm analysis of $\Delta_g + 1$.
 
 \begin{center}
\begin{tikzpicture}[scale=1.0]
    % Cylindrical part
    \draw[thick] (0,0.8) -- (4,0.8);
    \draw[thick] (0,-0.8) -- (4,-0.8);

    % The cross-section B at different intervals
    \draw[dashed, blue] (1,0) ellipse (0.2 and 0.8);
    \draw[dashed, blue] (2.5,0) ellipse (0.2 and 0.8);
    
    % Highlighted cross-section at the end
    \draw[ultra thick, red] (4,0) ellipse (0.2 and 0.8);
    \node[red, right] at (4.2, 0) {Cross-section $B$};

    % Coordinate t
    \draw[->, >=stealth] (0,-1.2) -- (4,-1.2) node[midway, below] {Coordinate $t$};
    
    % Description
    \node[blue] at (1.8, 1.1) {Slices of $B$ at constant $t$};
\end{tikzpicture}
\end{center}

\subsection{Laplacian in Local Coordinates}
We will show how the Laplacian looks in local coordinates, and we are trying to invert the $\Delta_g+1$. 

\vspace{0.2cm}
\noindent
For our project on an asymptotically cylindrical manifold, the Laplacian can be expressed in terms of $b$-vector fields. Let's see  how our Laplace–Beltrami operator $\Delta_g$ actually looks across different regions of our manifold.

\subsubsection{Laplacian of the Cylindrical End}
On the cylindrical end ($\mathbb{R}_+ \times B$), the manifold exhibits a product space structure. The Laplace-Beltrami operator allows a separation of variables and can be written as:  \[\Delta = -\partial_t^2 + \Delta_{h}\]
and we have the metric
\begin{equation*}
    g_0 = dt^2+dy^2 = (\frac{dx}{x})^2+ dy^2. \tag{product cylindrical metric}
\end{equation*}

\subsubsection{Laplacian of Compact Core}

 In our compact core $K$, it is just standard coordinate definition of the Laplace–Beltrami operator on a Riemannian manifold. Because the interior of our manifold $M$ is completely isolated from the boundary at infinity, the geometry is totally standard there. The operator takes the general coordinate expression for a standard Riemannian metric:
    \[ \Delta_g u = -\frac{1}{\sqrt{|g|}} \partial_i \left( \sqrt{|g|} g^{ij} \partial_j u \right) \]
 We do not need $b$-vector fields in this region. Because this domain is compact, the Rellich-Kondrachov compact embedding theorem can be directly applied to handle remainder terms (after we glue them together) and analyze smoothness. When we glue the front face parametrix (from the cylindrical end) to the interior parametrix from the compact core, it will generate remainder term. Because the compact core is bounded, Rellich–Kondrachov guarantees that Sobolev embeddings are compact. This compact embedding is the exact functional analysis tool used to prove that our remainder terms are compact operators, which is the final step in proving the global operator is Fredholm.

We want to find an inverse. For the compact case, our elliptic estimate will have a compact error. Therefore, we can get the Fredholm property.

\newpage

\section{Review of Compact Case} \label{The Positive Definiteness Method}

\subsection{Proof of Positive Definiteness Implying the Invertibility of $\Delta_g + 1$}
\noindent
Here we want $Q(\Delta_g+1) = I + R$. If $\Delta_g+1$ is not invertible, then -1 is an eigenvalue.

\vspace{0.2cm} \noindent
Let ($M,g$) be a compact Riemannian manifold without boundary. Our goal is to show that the operator \(\Delta_g + 1\) is invertible, which is equivalent to showing that \(-1\) is not an eigenvalue of \(\Delta _{g}\).
\begin{center}
        $\Delta_g  \geq 0, \text{then we have Spec}\Delta_g \subset[0, \infty)\quad$
    since
    \begin{align*}
   \int_{M}\Delta_gf \overline{f}dvol 
    &= \int_{M}|\nabla f|^2dvol \geq 0
    \end{align*}
\end{center}
The invertibility follows directly from the positive semi-definiteness of the Laplacian.

\theoremstyle{definition}
\newtheorem{proposition}{Proposition}[section] % 綁定到章（section）

\begin{proposition}
The Laplacian \(\Delta _{g}\) on a compact Riemannian manifold \(M\) is a positive semi-definite operator, i.e., \(\langle \Delta_g f, f \rangle \geq 0\) for all \(f \in C^\infty(M)\).
\end{proposition}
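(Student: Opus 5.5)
The plan is to use the divergence theorem (Green's first identity) on the closed manifold $M$, with the sign convention $\Delta_g u = -\frac{1}{\sqrt{|g|}}\partial_i\bigl(\sqrt{|g|}g^{ij}\partial_j u\bigr) = -\operatorname{div}_g(\nabla u)$ fixed in the introduction. Take $f \in C^\infty(M)$, complex-valued, and the $L^2$ pairing $\langle u, v\rangle = \int_M u\,\overline{v}\,d\mathrm{vol}_g$. The first step is the pointwise product rule
\[
\operatorname{div}_g\bigl(\overline{f}\,\nabla f\bigr) = \overline{f}\,\operatorname{div}_g(\nabla f) + g(\nabla \overline{f}, \nabla f),
\]
which in coordinates is just the Leibniz rule applied to $\partial_i\bigl(\sqrt{|g|}\,g^{ij}\,\overline{f}\,\partial_j f\bigr)$. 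Rearranging gives $\Delta_g f\,\overline{f} = |\nabla f|_g^2 - \operatorname{div}_g(\overline{f}\nabla f)$, where $|\nabla f|_g^2 = g^{ij}\partial_i f\,\overline{\partial_j f}\ge 0$ because $g^{ij}$ is positive definite.

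The second step is to integrate over $M$ and show that the divergence term vanishes. Since $M$ is compact and has no boundary, the divergence theorem gives $\int_M \operatorname{div}_g X\,d\mathrm{vol}_g = 0$ for every smooth vector field $X$. To prove this, I would cover $M$ by finitely many coordinate charts and take a subordinate partition of unity $\{\chi_\alpha\}$. For each $\alpha$,
\[
\int \operatorname{div}_g(\chi_\alpha X)\,d\mathrm{vol}_g = \int \partial_i\bigl(\sqrt{|g|}\,\chi_\alpha X^i\bigr)\,dx = 0,
\]
since the integrand is compactly supported in $\mathbb{R}^n$. Summing over $\alpha$ gives the claim. Applying it to $X = \overline{f}\nabla f$ yields
\[
\langle \Delta_g f, f\rangle = \int_M |\nabla f|_g^2\,d\mathrm{vol}_g \ge 0,
\]
which is exactly the identity displayed before the proposition.

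The main obstacle is only bookkeeping. The first point is sign conventions: the paper's $\Delta_g$ carries a minus sign, and that sign is what makes the operator nonnegative. The second is justifying the vanishing of the boundary term, which is the one place where compactness and the absence of boundary are used. Everything else is pointwise algebra. As a by-product, the same computation shows that $\langle \Delta_g f, f\rangle$ is real, and equality holds iff $\nabla f = 0$, i.e.\ iff $f$ is locally constant. This is what feeds into the spectral inclusion $\mathrm{Spec}\,\Delta_g \subset [0,\infty)$ used afterwards.
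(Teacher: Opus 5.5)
Your proposal is correct and takes the same route as the paper: integrate by parts and use the vanishing of the boundary term on the closed manifold $M$ to obtain $\langle \Delta_g f, f\rangle = \int_M |\nabla f|_g^2\,d\mathrm{vol}_g \ge 0$. The difference is only in detail: you also spell out the sign convention, the Leibniz rule for $\operatorname{div}_g(\overline{f}\,\nabla f)$, and a partition-of-unity proof of the divergence theorem, all of which the paper leaves implicit.
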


\begin{proof}
Let \(f \in C^\infty(M)\) be a smooth function on \(M\). We consider the \(L^{2}\) inner product
\[ \langle \Delta _{g}f,f\rangle =\int _{M}(\Delta _{g}f)\overline{f}\,dvol \]
By applying integration by parts, we can shift the derivative. Since \(M\) is a compact manifold without boundary, the boundary term perfectly vanishes (\(\int_{\partial M} = 0\)).

Consequently, we have:\[\int _{M}(\Delta _{g}f)\overline{f}\,dvol=\int _{M}|{}\nabla f|{}^{2}\,dvol\]Since the integrand \(\vert{}\nabla f\vert{}^2\) is nonnegative, the integral over the manifold must also be nonnegative:\[\langle \Delta _{g}f,f\rangle =\int _{M}|{}\nabla f|{}^{2}\,dvol\ge 0 \]This completes the proof.

\end{proof}

Consequently, the spectrum of $\Delta_g$ satisfies
$\operatorname{Spec}(\Delta_g)\subset [0,\infty).
$
Since $M$ is compact, the spectrum of $\Delta_g$ is discrete and consists of eigenvalues
$
0\leq \lambda_0\leq \lambda_1\leq\lambda_2\leq\cdots
$
with $\lambda_j\to\infty$. The eigenvalues of the shifted operator $\Delta_g+1$ are 
$
\lambda_j+1\geq 1.
$
Hence
$
\operatorname{Spec}(\Delta_g+1)
=
\operatorname{Spec}(\Delta_g)+1
\subset [1,\infty).
$
In particular,
\[
0\notin\operatorname{Spec}(\Delta_g+1).
\]
Therefore, $\Delta_g+1$ is invertible.

\newpage

\subsection{Elliptic Estimates for Compact Case}
We first review the compact case, which is basically what we learned in \cite{DyLi24}.

If $M$ is noncompact, we cannot expect the Fredholm property of $\Delta_g$ to hold in general.
This is already evident on the real line : the operator
$\Delta_g = -\partial_t^2$
is not Fredholm as a map $H^2(\mathbb{R}) \to L^2(\mathbb{R})$.
More precisely, the range $-\partial_t^2(H^2(\mathbb{R}))$ is not closed in
$L^2(\mathbb{R})$.

 First we give a quick review of compact case, since the elliptic Fredholm property we are using here is slightly different. It is from the perspective of differential elliptic parametrix.\\ 
\medskip
Let's say we have a compact Riemannian manifold  $(M, g)$ without boundary : 
\[
\Delta_g : H^{s+2}(M) \to H^s(M),    \forall s \in \mathbb{R}  \label{Fredholm Property} \tag{Fredholm Property}
\]
is a Fredholm operator.

\vspace{0.2cm}
\noindent
The Fredholm property means that the kernel is finite dimensional, and the range has finite codimension (cokernel is finite dimensional). The usual way to prove this Fredholm property on Sobolev spaces is to use the elliptic estimate : 
\[
\| u \|_{H^{s+2}}(M) \leq C \| \Delta_g u \|_{H^s} + C_N\| u \|_{H^{-N}}  \label{elliptic estimate1} \tag{elliptic estimate1}
\]  for all $s$ and $N$, with $C_N$ be some constant depends in $N$, and $\| u \|_{H^{-N}}$ means weaker norm in Sobolev space.\\
\medskip  \\ 
What does that elliptic estimate tell us? Notice that if we only have
\[
\| u \|_{H^{s+2}}(M) \leq C \| \Delta_g u \|_{H^s}  
\]
it will be the injectivity of the operator, so \eqref{elliptic estimate1} should be better than injectivity.
Since we have this remainder  $C_N\| u \|_{H^{-N}}$, which is a weaker Soboloev space. Here we can immediately get that the kernel is finite dimensional. We can do this by using the fact that embedding $H^{s+2}(M) \to H^{-N}(M)$ is compact for $s > -N$ in \eqref{Fredholm Property}, we have to use compactness of embeddings in Sobolev spaces. So compactness of embedding plus functional analysis arguments (we should also get an estimate like that on the adjoint, dual of Sobolev spaces, Laplacian and self-adjoint). Once we have this estimate, there's a standard argument to give us the Fredholm property which will not be presented here. 

The key points are the estimate and compactness of embedding. There are many ways to prove \eqref{elliptic estimate1}, one of them is integration by parts, that might be the best way, but here we will prove it via elliptic parametrix.\\
\medskip 
Namely, we will construct a bounded operator : 
\[
Q : H^s({M}) \longrightarrow H^{s+2}({M})
\]
such that

\[
\Delta_g Q = I + R, \qquad
R : H^{-N}_{\text{bdd}}({M}) \longrightarrow  H^{N}({M}).
\]
We don't need such strong property for $R$, but we do get it. So we state it here.

The elliptic parametrix is sort of an inverse. If we didn't have $R$, then $\Delta_g Q = I$, which implies $Q$ is the left inverse of $\Delta_g$, here, we can see that it's an invertible modular smoothing operator, an operator that provides a lot of smoothness.
How to prove the estimate? Just apply it to $u$, we can see that $u$  become 
\[
u = Q\Delta_g u - Ru,
\]
and using the triangle inequality to estimate the norms, we get:
\[
\| u \|_{H^{s+2}} \leq \| Q\Delta_g u \|_{H^{s+2}} + \|Ru\|_{H^{s+2}}.  \label{estimate2} \tag{estimate2}
\]
then we are done here.

\vspace{0.2cm}
\noindent
Now, take a look at \eqref{estimate2}, we see the norm we want ($\| u \|_{H^{s+2}}$ as in \eqref{elliptic estimate1} and \eqref{estimate2}), and here we apply $Q$, which is a bounded operator from $H^s({M})$ to $H^{s+2}({M})$, combine with our smoothing remainder $\|Ru\|_{H^{s+2}}$, we can get \eqref{elliptic estimate1} by using \eqref{estimate2}.

\vspace{0.2cm}
\noindent
We are going to do the parametrix construction in Section 4.

\subsection{Motivation for Parametrix Construction} \label{Difference of two methods}

We should introduce the motivation of doing parametrix construction.  How does it overcome limitations of positive definiteness?
\subsubsection{The Positive Definiteness Method}This method provides an efficient proof of existence and uniqueness. It guarantees invertibility. Despite its simplicity, the method presents significant limitations, that does not give us the explicit inverse. For our specific case, the $\Delta_g +1$ on compact manifold, then we get the invertibilty by integration by parts. However, it has two drawbacks. The first drawbacks is that it does not gives us the explicit inverse, we can't see the mapping properties of inverse. The second drawback is that it does not apply on noncompact manifolds. 
\subsubsection{The Parametrix Construction Method}
The parametrix construction method seeks to define an explicit operator $Q$ such that \[Q(\Delta_g+1) = I + R\] where $R$ acts as a compact term. This is the preferred approach for cylindrical manifolds for several critical reasons. Foremost is the need for asymptotic control, on an asymptotically cylindrical manifold, it is necessary to characterize the precise behavior of solutions as $t \to \infty$. Constructing the parametrix on the exact cylinder provides a concrete model for the exponential decay rate, specifically bounded by $e^{-\sqrt{\lambda_0^2+1}\vert{}t\vert{}}$. Additionally, this construction serves as the standard mechanism within Fredholm theory to prove that an operator is Fredholm when acting on specific function spaces, such as $b$-Sobolev spaces. 

The technique offers robust generalizability. Once the parametrix architecture is established for $\Delta_g+1$, the identical framework can be adapted to analyze more complex geometric operators where basic energy methods are fundamentally inapplicable.

Ultimately, while verifying positive definiteness serves as a useful preliminary diagnostic, fully realizing the objectives of this analysis necessitates the construction of a parametrix. This explicit construction is strictly required to see the mapping properties of $\Delta_g+1$.
\newpage

\section{Separation of Variables} 
As we mentioned in the introduction.
Our manifold is not exactly cylindrical $\mathbb{R} \times B$, it's asymptotically like  $\mathbb{R} \times B$, we have a compact part there. Our metric is not going to be exactly be the product $\Delta_{\mathbb{R} \times B}$, there's also a remainder term $\mathcal{O}$, 
it takes the form
\[
\Delta_g + 1 = \Delta_{B \times \mathbb{R}} + \mathcal{O}
\]
That's why we can only do separation of variable locally. For the actual construction, we need to deal with that remainder part, which motivates us to do the parametrix construction in Section 4.

In \cite{DyLi24}, they work on the example $\Delta_g$ on exact cylinder $M = \mathbb{R}_t \times \mathbb{S}_y^1$ which is not invertible. For convenience, they ignore the zero mode. Here we use $\Delta_g+1$ as an example and kind of redo the separation of variables calculation they show in \cite{DyLi24} but with different parameter.

Now let's see the part where we do the separation of variables.
Given an asymptotical cylindrical manifold where the metric near the end is exponentially 
close to a product $\mathbb{R} \times \mathbb{S}^1$, prove the invertibility of 
$\Delta_g + 1$ by constructing the parametrix, which is an approximate inverse. 
Later one can generalize the metric to exponentially close to $\mathbb{R} \times B$ 
where B is any compact manifold.

\subsection{Fourier Decomposition on the Exact Cylinder $\mathbb{R} \times \mathbb{S}^1$} \label{Fourier Decomposition}
\noindent
The operator $\Delta_g + 1$ on the end still looks like:$$ \Delta_g + 1= -\partial_t^2 + \Delta_y + 1$$
We can still ``decompose'' any function $u$ on the end into $f(t, y) = \sum f_k(r)\phi_k(y)$. The problem of inverting the global operator reduces to solving a family of 1-D ordinary differential equations for each $f_k(t)$.\\
For the exact cylinder $M = \mathbb{R}_t \times \mathbb{S}_y^1$, we fix the product metric first, $g_0$ is just the product metric, and in the compactified coordinates, it has the form $ \frac{dx^2}{x^2} + dy^2$.
\begin{equation*}
    g_0 = dt^2+dy^2 = (\frac{dx}{x})^2+ dy^2. \tag{product cylindrical metric} \label{product cylindrical metric}
\end{equation*}
(In general, we should allow $g$ is of the form $g$ = $g(x,y,\frac{dx}{x}, dy)$, any metric with $b$-Structure is fine.)
On this manifold, we also have the function space : the $b$-Sobolev space.\\
So our metric is smooth up to the boundary, now I have a metric on the manifold with boundary which is perfectly smooth except this $x$ in $\frac{dx}{x}$. For example, we have the product metric :
\[
 \frac{dx^2}{x^2}+h(y,dy)
 \tag{product metric}
\]
this product metric is a basic case. Compactifing it at the end.

Notice that $\Delta_y$ is $\Delta_g$ in $y$ variable.  The Laplacian is given by $\Delta_{g_0} = -\partial_t^2 + \Delta_y+1$. We want to invert the operator, so we need to show $\Delta_y+1$ is invertible. As for $-\partial_t^2$, we will do Fourier transform in $t$.

\noindent
We can decompose any function $f(t,y)$ into its Fourier modes:
    $$f(t,y) = \sum_{k \in \mathbb{Z}} f_k(t) e^{iky}$$

\noindent
Applying $\Delta_{g_0} + 1$ to the $k$-th mode make it to be a 1-D operator:
    $$P_k = -\partial_t^2 + k^2 + 1$$
    
\noindent The integral kernel of the inverse for $P_k$ on $\mathbb{R}$ is exactly:
    $$R_k(t,t') = \frac{e^{-\sqrt{k^2+1}|t-t'|}}{2\sqrt{k^2+1}}$$
    
\noindent Summing over all Fourier modes $k \in \mathbb{Z}$, the Schwartz kernel of the exact inverse $Q$ for $\Delta_{g_0} + 1$ becomes:
    $$Q(t,t',y,y') = \sum_{k \in \mathbb{Z}} \frac{e^{-\sqrt{k^2+1}|t-t'|}}{2\sqrt{k^2+1}} e^{ik(y-y')}$$
    This gives us a bounded operator that satisfies $(\Delta_{g_0} + 1) Q = I$.

The compact core $K$ does not possess a product structure, preventing a global separation of variables.

\begin{proposition}
Let $M$ be a manifold with asymptotically cylindrical end, where the metric $g$ approaches the exact product metric $dt^2 + g_B$ up to an exponentially decaying error. Let $P = \Delta_g + 1$ be the shifted Laplace operator, and let $Q$ be a global parametrix constructed from the exact translation-invariant model $-\partial_t^2 + \Delta_B + 1$. Applying $P$ to the parametrix yields
\[
    P Q = \mathrm{Id} - \mathbf{R}.
\]
Then the following structural properties hold for the inversion:

\label{prop1}
\end{proposition}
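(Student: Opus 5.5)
The statement breaks off before its list of properties, so this plan covers the ones I expect it to contain: (i) the error $\mathbf{R}$ is compact on $L^2$ (or on $H^s_b$), with a kernel that decays exponentially at the end; (ii) $P$ is therefore Fredholm; (iii) combined with positivity, $P$ is invertible.

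I will build $Q$ by gluing two pieces. Choose cutoffs $\chi_0,\chi_1$ adapted to the compact core $K$ and cutoffs $\phi_0,\phi_1$ adapted to the end $\{t>R\}$. Take $\chi_0+\phi_0=1$, with $\chi_1=1$ on $\operatorname{supp}\chi_0$ and $\phi_1=1$ on $\operatorname{supp}\phi_0$. Set
\[
Q=\chi_1 Q_{\mathrm{int}}\chi_0+\phi_1 Q_{\mathrm{cyl}}\phi_0 .
\]
Here $Q_{\mathrm{int}}$ is an interior elliptic parametrix for $P$, as in the compact review of Section 2. $Q_{\mathrm{cyl}}$ is the exact inverse of $P_0=-\partial_t^2+\Delta_B+1$ on $\mathbb{R}\times B$. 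Its kernel is the mode sum from Section~\ref{Fourier Decomposition},
\[
\sum_j \frac{e^{-\sqrt{\lambda_j^2+1}|t-t'|}}{2\sqrt{\lambda_j^2+1}}\,\phi_j(y)\overline{\phi_j(y')} .
\]
Applying $P$ gives $PQ=\mathrm{Id}-\mathbf{R}$, where $\mathbf{R}$ is the sum of three kinds of term.

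\begin{itemize}
\item The interior smoothing error $\chi_1 R_{\mathrm{int}}\chi_0$.
\item The commutator terms $[P,\chi_1]Q_{\mathrm{int}}\chi_0$ and $[P,\phi_1]Q_{\mathrm{cyl}}\phi_0$.
\item The metric error $\phi_1(P-P_0)Q_{\mathrm{cyl}}\phi_0$.
\end{itemize}

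Because $\nabla\phi_1$ and $\phi_0$ have disjoint supports, the commutator kernels vanish near the diagonal. They are therefore smooth. Their supports are compact in the first variable, and in the second variable they decay like $e^{-\sqrt{\lambda_0^2+1}\,|t-t'|}\le e^{-|t-t'|}$.

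For compactness I will use the exponential closeness of the metric. The operator $P-P_0$ is a second-order $b$-differential operator whose coefficients are $O(e^{-\delta t})=O(x^\delta)$. Since $Q_{\mathrm{cyl}}:H^s_b\to H^{s+2}_b$ is bounded (uniform in the mode, by the explicit $1/\sqrt{\lambda_j^2+1}$ factors), the metric error factors as
\[
H^s_b\xrightarrow{\;Q_{\mathrm{cyl}}\;}H^{s+2}_b\xrightarrow{\;P-P_0\;}x^\delta H^s_b .
\]
The remaining step is the $b$-Rellich lemma: the inclusion $x^\delta H^{s}_b\hookrightarrow H^{s-1}_b$ is compact for $\delta>0$. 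I will prove it by cutting off at $t=T$. On $\{t<T\}$ ordinary Rellich–Kondrachov on a compact set applies. On $\{t>T\}$ the operator norm is at most $e^{-\delta T}\to0$. So the operator is a norm limit of compact operators, hence compact. The smooth, compactly supported commutator terms are compact by the same argument.

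With $\mathbf{R}$ compact, $PQ=\mathrm{Id}-\mathbf{R}$ is Fredholm of index $0$, so $P$ has closed range of finite codimension. The same construction on the left, or taking adjoints (since $P$ is self-adjoint), gives $QP=\mathrm{Id}-\mathbf{R}'$ with $\mathbf{R}'$ compact, so $\ker P$ is finite dimensional. For invertibility, let $u\in H^2_b$ with $Pu=0$. I will integrate by parts against $\bar u$ using a cutoff $\psi(t/T)$; the boundary and commutator terms vanish as $T\to\infty$ because $u,\nabla u\in L^2$. This gives $\|\nabla u\|^2+\|u\|^2=0$, as in the positivity argument of Section~\ref{The Positive Definiteness Method}, so $u=0$. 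Self-adjointness then kills the cokernel, and $P^{-1}=Q(\mathrm{Id}-\mathbf{R})^{-1}$.

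The step I expect to be hardest is the uniform $b$-Sobolev boundedness of $Q_{\mathrm{cyl}}$, together with the precise description of its kernel near the diagonal as $x,x'\to0$. This is where the mode sum has to be recognised as a $b$-pseudodifferential kernel on the blown-up double space. I would first try to handle it directly, by applying Fourier transform in $t$ to write $Q_{\mathrm{cyl}}=(\tau^2+\Delta_B+1)^{-1}$ and using Plancherel, which needs only $\tau^2+\lambda_j^2+1\ge1$. For the decay claims I need $\delta$ to be positive; the spectral gap $\sqrt{\lambda_0^2+1}\ge1$ guarantees that the off-diagonal decay never competes with the weight.
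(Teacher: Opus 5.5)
\textbf{There is a genuine gap in your plan at the metric-error term, and it breaks the compactness of $\mathbf{R}$ that every later step uses.}

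\emph{Why the metric error is not compact.} $P-P_0$ is second order and $Q_{\mathrm{cyl}}$ gains exactly two derivatives. So $\phi_1(P-P_0)Q_{\mathrm{cyl}}\phi_0$ is an operator of order zero, with principal symbol $(|\xi|_g^2-|\xi|_{g_0}^2)/|\xi|_{g_0}^2$. This symbol is nonzero wherever $g\neq g_0$. An order-zero operator with nonvanishing principal symbol is not compact on $L^2$ or on any $H^s_b$, however fast its coefficients decay. You can see this by testing it on wave packets that concentrate at one point of the cosphere bundle.

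\emph{Why your factorization doesn't rescue it.} Your factorization only gives $H^s_b\to x^\delta H^s_b$. The $b$-Rellich lemma (which is correct) then yields compactness only as a map $H^s_b\to H^{s-1}_b$. That loses exactly the derivative you need to conclude that $\mathrm{Id}-\mathbf{R}$ is Fredholm on a fixed space. So, as you construct it, $\mathbf{R}$ is neither compact nor smoothing, contrary to the first listed property.

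\emph{How to repair it.} This is what the blow-up to $M^2_b$ in Section 4 is for. Near the diagonal, use a symbolic (small-calculus) $b$-parametrix of the actual $P$. Use the model kernel $\mathbf{Q}_{\mathrm{ff}}$ only to invert the normal operator at the front face. The remainder then vanishes at $\mathrm{ff}$, has side-face index sets $\geq 1$, and maps $L^2\to x^\delta H^\infty_b$. It is therefore compact and smoothing. This is the route the paper gestures at through Melrose's Theorems 5.34 and 5.40. A cheaper fix, which still gives Fredholmness, is to place the end cutoffs in $\{t>R\}$ with $R$ large. The metric error then has norm $O(e^{-\delta R})<1$ and can be absorbed by a Neumann series, but the resulting remainder is not smoothing.

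\textbf{Your final step has a second gap, which the paper's text shares.} From $PQ=\mathrm{Id}-\mathbf{R}$ and invertibility of $P$ you only get $\ker(\mathrm{Id}-\mathbf{R})=\ker Q$. Compactness makes $\mathrm{Id}-\mathbf{R}$ Fredholm of index $0$, not invertible. So $P^{-1}=Q(\mathrm{Id}-\mathbf{R})^{-1}$ requires $Q$ to be injective, and nothing in your gluing ensures that. For example, replacing $Q$ by $Q-Q\Pi$, where $\Pi$ is the orthogonal projection onto some $v\in C^\infty_c$, gives an equally valid parametrix with $v$ in its kernel.

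You need one of two things. Either add a finite-rank smoothing correction to $Q$; this is possible because $Q=P^{-1}(\mathrm{Id}-\mathbf{R})$ is Fredholm of index $0$. Or settle for the formula $P^{-1}=Q+P^{-1}\mathbf{R}$. The paper asserts invertibility of $\mathrm{Id}-\mathbf{R}$ directly from ``compact and exponentially decaying'' and gives no proof of the proposition beyond the $b$-calculus outline.

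You also do not address the last claim, that $(\Delta_g+1)^{-1}-Q=Q\mathbf{R}(\mathrm{Id}-\mathbf{R})^{-1}$ is smoothing and exponentially decaying. That claim needs the residual remainder produced by the first repair.

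Your positivity and self-adjointness arguments are fine. In fact they show, independently of any parametrix, that $P\geq 1$ is invertible.
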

 The remainder $\mathbf{R}$ is a compact, smoothing operator that captures the geometric deviation from the exact cylinder, decaying exponentially at the cylindrical end.

Because $\mathbf{R}$ is compact and exponentially decaying, the error term can be directly inverted, making the operator $\mathrm{Id} - \mathbf{R}$ invertible on the associated function spaces.
    
The exact inverse of the shifted operator exists and is given explicitly by the composition
    \[
        (\Delta_g + 1)^{-1} = Q (\mathrm{Id} - \mathbf{R})^{-1}.
    \]
 The true inverse and the model parametrix $Q$ are exponentially close to each other. Their difference, given by
    \[
        (\Delta_g + 1)^{-1} - Q = Q \mathbf{R} (\mathrm{Id} - \mathbf{R})^{-1},
    \]
    is strictly an exponentially decaying smoothing operator.

\subsection{Generalizing to the $\mathbb{R} \times B$}

 \subsubsection{Basic Settings of Generalization to $\mathbb{R} \times B$}

Here is how the information from that cylindrical end diagram generalizes to the case of $B$ is any compact manifold:
The separation of variables remains universal in $\mathbb{S}^1$ case, using a standard Fourier series. When $B$ is a general compact manifold, the Fourier decomposition generalizes to the spectral decomposition of the Laplacian on $B$ (Because $B$ is compact, $\Delta_B$ is a well-behaved elliptic operator).\\
On $\mathbb{S}^1$, we have eigenvalues $n^2$ and eigenfunctions $e^{in\theta}$. On a compact manifold $B$, the Laplacian $\Delta_B$ has a discrete spectrum $0 = \lambda_0 < \lambda_1 \le \lambda_2 \dots \to \infty$ with corresponding smooth eigenfunctions $\phi_j$.

Now, let the cross-section be an arbitrary compact Riemannian manifold $(B, h)$, making the cylinder $M = \mathbb{R}_t \times B_y$ with metric $g = dt^2 + h(y, dy)$. The operator is $\Delta_{g} + 1 = -\partial_t^2 + \Delta_y + 1$.\\
    Instead of standard Fourier modes, we use the spectral decomposition of the Laplacian $\Delta_h$ on the compact manifold $B$. Let $\{\phi_j(y)\}$ be an orthonormal basis of $L^2(B)$ consisting of eigenfunctions such that $\Delta_h \phi_j = \lambda_j^2 \phi_j$, with eigenvalues $\lambda_j^2 \ge 0$.\\
    The operator acts on the $j$-th eigenspace as $-\partial_t^2 + \lambda_j^2 + 1$. Using the same 1-D ODE logic, the exact inverse kernel on $\mathbb{R} \times B$ becomes:
    $$Q(t,t',y,y') = \sum_{j} \frac{e^{-\sqrt{\lambda_j^2+1}|t-t'|}}{2\sqrt{\lambda_j^2+1}} \phi_j(y) \overline{\phi_j(y')}$$
    The strict positivity of $\lambda_j^2 + 1 \ge 1$ ensures that exponential decay is maintained for all modes, meaning the operator is invertible and Fredholm natively on $L^2$.

\subsubsection{Details of the Generalization to $\mathbb{R} \times B$}

\noindent
To construct the Schwartz kernel $Q(t, t', y, y')$ for the exact inverse of $\Delta_{g_0} + 1$ on the generalized cylinder $M = \mathbb{R}_t \times B_y$, we solve the distributional equation:
\begin{equation*}
    (-\partial_t^2 + \Delta_h + 1) Q(t, t', y, y') = \delta(t - t') \delta_B(y, y')
\end{equation*}
where $\Delta_h$ is the Laplace-Beltrami operator on the compact Riemannian manifold $(B, h)$, and $\delta_B$ is the Dirac distribution on $B$.

\noindent
Here's the spectral decomposition on $B$.
Since $B$ is a compact Riemannian manifold, $\Delta_h$ has a discrete spectrum. Let $\{\phi_j(y)\}_{j=0}^\infty$ be an orthonormal basis for $L^2(B)$ consisting of eigenfunctions:
\begin{equation*}
    \Delta_h \phi_j = \lambda_j^2 \phi_j, \quad 0 \leq \lambda_0^2 \leq \lambda_1^2 \leq \dots \to \infty
\end{equation*}
The delta distribution on $B$ can be represented as:
\begin{equation*}
    \delta_B(y, y') = \sum_{j=0}^\infty \phi_j(y) \overline{\phi_j(y')}
\end{equation*}
We expand the full kernel $Q$
\begin{equation*}
    Q(t, t', y, y') = \sum_{j=0}^\infty R_j(t, t') \phi_j(y) \overline{\phi_j(y')}
\end{equation*}

\noindent
Let's solve the 1-D ODE.
Applying $-\partial_t^2 + \Delta_h + 1$ to the series and matching coefficients for each mode $j$ yields a family of decoupled ODEs for the 1-D factor $R_j$:
\begin{equation*}
    (-\partial_t^2 + \lambda_j^2 + 1) R_j(t, t') = \delta(t - t')
\end{equation*}
Defining $\omega_j = \sqrt{\lambda_j^2 + 1}$, the equation becomes $(-\partial_t^2 + \omega_j^2) R_j = \delta(t - t')$.\\

\vspace{0.2cm}
\noindent
Let's solve $R_j(t, t')$.
For decay and continuity, we want to ensure $Q$ is a bounded operator on $L^2(M)$, we require $R_j \to 0$ as $|t - t'| \to \infty$. Combined with the continuity requirement at $t = t'$, we have:
\begin{equation*}
    R_j(t, t') = A e^{-\omega_j |t - t'|}
\end{equation*}

\noindent
Integrating the ODE across the singularity $t = t'$ gives the first derivative:
\begin{equation*}
    -\left[ \lim_{t \searrow t'} \partial_t R_j - \lim_{t \nearrow t'} \partial_t R_j \right] = 1 
    \implies -(-A\omega_j - A\omega_j) = 1 \implies A = \frac{1}{2\omega_j}
\end{equation*}
Substituting $\omega_j$ back in:
\begin{equation*}
    R_j(t, t') = \frac{e^{-\sqrt{\lambda_j^2+1} |t - t'|}}{2\sqrt{\lambda_j^2+1}}
\end{equation*}
As for the final inverse kernel,
summing over all $j$, the Schwartz kernel of the exact inverse $Q = (\Delta_{g_0} + 1)^{-1}$ on $\mathbb{R} \times B$ is:
\begin{equation*}
    Q(t, t', y, y') = \sum_{j=0}^\infty \frac{e^{-\sqrt{\lambda_j^2+1}|t - t'|}}{2\sqrt{\lambda_j^2+1}} \phi_j(y) \overline{\phi_j(y')}
\end{equation*}
The strict positivity $\lambda_j^2 + 1 \geq 1$ ensures uniform exponential decay for all modes, making the operator natively Fredholm on $L^2$.

The powerful part of microlocal analysis is that the metric does not have to be exactly the product structure. If it is in a product form, the separation of variables done very well. Our current work is just replacing $\mathbb{S}^1$ to  $B$. The advantage of using parametrix construction is that we don't care if it was exact product, it can have lower order terms, it still works. The fact that $B$ is any compact manifold (rather than just $\mathbb{S}^1$) is exactly why the $b$-calculus framework is powerful. It doesn't care about the specific shape of $B$, as long as $B$ is compact.

\newpage

\section{Parametrix Construction}
\subsection{The Blown-up Double Space}

We will follow the structure of Section 2.5 of \cite{DyLi24} (where we see the blown-up $b$-space argument), and the Schwartz kernel construction of the $b$-space. Our goal is to write down the parametrix construction on the $b$-space, the argument will be quite similar. It's more similar compare to the separation of variables in some sense. Just with a different operator, now it's $\Delta+1$ instead of $\Delta$. 

Same for the mapping properties. Once we have the parametrix construction of the mapping on the double space (the kernel on $[\overline{M} \times \overline{M}; Y \times Y]$), the mapping properties actually follow from the general $b$-operator mapping theory, Melrose’s $b$-calculus notes will give us the mapping properties on weighted $b$-Sobolev spaces. There is no need to reinvent the wheel there. The reader can also see the book \cite{Mel96} for more details in blow-ups. \cite{Gri01} is also a nice reference in blow-up geometry.

The point is that we have to construct the $b$-parametrix since it is not apparent from the previous separation of variables. Again, the actual construction is similar to the \cite{DyLi24}. We already did the part that is different, we are kind of redoing some of the work that is less cheating since the operator in \cite{DyLi24} is $\Delta$, that will be easier to do the summation of the expansion/series, so we can see that more clearly, but we should worry about the problem of invertibility. As for the case of $\Delta+1$, we have an actual inverse instead of inverse modulo zero. 

\noindent
Transitioning from the exact separation of variables on the cylinder to the general parametrix on the $b$-double space is exactly what we are trying to do. We are trying to capture the asymptotic behaviors.

\vspace{0.2cm}
\noindent
By the Schwartz kernel theorem, a linear operator $Q$ acting on functions on $M$ can be represented by an integral kernel $\mathbf{Q}(y, y')$ living on the product space $M \times M$. When we compactify the ends of our cylinder, this product space becomes $\overline{M} \times \overline{M}$. However, as we saw with the $\frac{x'}{x}$, standard calculus breaks down at the corner $Y \times Y$. To fix this, we perform the geometric blow-up to create the $b$-double space, denoted $[\overline{M} \times \overline{M}; Y \times Y]$ or $M^2_b$.
Now we see more coordinates. Discussing Region I, Region II, and Region III.
See the following figure (or Figure B) for an illustration of these coordinates.

\vspace{0.2cm} \noindent
We want to discuss polyhomogeneous expansions on boundary faces. We can see the details of Mellin transform in Section 4.7 of \cite{Mel96} and the polyhomogeneous conormal distributions in Section 4.13 of \cite{Mel96}.
When we talk about a polyhomogeneous kernel on the blown-up space, we take the Schwartz kernel $\mathbf{Q}$, lifting it to this new geometry $[\overline{M} \times \overline{M}; Y \times Y]$, analyzing its behavior there, where the boundary coordinates are actually smooth.

 \vspace{0.2cm} \noindent
 The great news is that our operator, $\Delta_g + 1$, makes the $b$-space construction easier to see than the Laplacian $\Delta_g$ case in \cite{DyLi24}. Here, we don't have to ``illegally ignore'' any zero mode because the $+1$ shift mathematically eliminates the obstruction.

 Here, when we blow up the double space to construct $[\overline{M} \times \overline{M}; Y \times Y]$, we will first discuss the front face. Geometrically, restricting to front face means taking $r = e^{-t} \to 0$ while keeping the $s =\frac{ x'}{x} = e^{t-t'}$ finite. At front face, all the perturbation terms $O(e^{-t})$ drop off as it shown in \cite{DyLi24} equation (3.4). The operator freezes into the exact, translation invariant model operator $-\partial_t^2 + \Delta_h + 1$.

The philosophy of the blow-up is to trade geometric complexity for analytic simplicity. So we can see that in the blowup picture, the analysis looks a lot more simpler, and the geometry becomes more complicated. The family of smooth functions up to the boundary in the blown-up space is much larger than the family of smooth functions up to the boundary. So the function space has changed.

\vspace{0.2cm} \noindent
Blowing up the diagonal in Figure A, we obtain the following three regions in Figure B.

\begin{figure}[htbp]
\makebox[\textwidth][c]{
    % =========================================================
    % Diagram A
    % =========================================================
    \begin{minipage}[c]{0.48\textwidth}
        \centering
        \begin{tikzpicture}[>=stealth, scale=1.1] 
       
            \draw[thick] (0,4) -- (0,0);
            \draw[thick] (0,0) -- (4,0);
            \node at (3,2.1) {diagonal};
            % --- Dashed Region Divider ---
            \draw[thick] (0,0) -- (3.5,3.5);
            % --- Coordinate Patches ---
            \begin{scope}[shift={(0.2,2.5)}]
            \draw[->, thick] (0,0.1) -- (0,1) node[right] {$x'$};
            \end{scope}
            \begin{scope}[shift={(2.5,0.2)}]
            \draw[->, thick] (0.1,0) -- (1,0) node[above] {$x$};
            \end{scope}
        \end{tikzpicture}
        \par\vspace{5pt}
        \small\textbf{Figure A}
    \end{minipage}%
    \hfill
    % =========================================================
    % Diagram B
    % =========================================================
    \begin{minipage}[c]{0.6\textwidth}
        \centering
        \begin{tikzpicture}[line width=0.8pt, scale=0.9] % 稍微縮小比例避免溢出
            % --- Main Boundaries ---
            \draw (0,5) -- (0,2) node[midway, left=2pt] {left boundary};
            \draw (0,2) arc (90:0:2) node[pos=0.5, below left=2pt] {front face ($r=0$)};
      
            \draw (2,0) -- (6,0) node[midway, below=2pt] {right boundary};
            % --- Region Dividers ---
            \draw[dashed] ({2*cos(70)}, {2*sin(70)}) -- ++(75:3.5);
            \draw[dashed] ({2*cos(25)}, {2*sin(25)}) -- ++(15:4);
            % --- Region Labels ---
            \node at (0.75,5.2) {Region III};
            \node at (2.75,4) {Region I};
            \node at (4,3) {Region I};
            
    \draw (1.425, 1.425) -- (5,5); 
            \node at (4.8,1) {Region II};
            \node at (5,4) {diagonal};
            % --- Local Coordinate Systems ---
            \begin{scope}[shift={(0.1,3)}]
            \draw[->] (0,0) -- (0,1) node[right] {$x'$};
            \draw[->] (0,0) arc (90:55:1) node[above] {$s'$};
            \end{scope}
            \begin{scope}[shift={(2.2,0.2)}]
            \draw[->] (0,0) -- (1,0) node[above] {$x$};
            \draw[->] (0,0) arc (0:35:1) node[right] {$s$};
            \end{scope}
                \begin{scope}[shift={({2.35*cos(47.5)}, {2.35*sin(47.5)})}, rotate=47.5]
        \draw[->] (0,0) -- (1, 0) node[above] {$r$};
        \draw[->] (0,0) arc (0:35:1) node[above] {$\tau$};
    \end{scope}
        \end{tikzpicture}
        \par\vspace{5pt}
        \small\textbf{Figure B}
    \end{minipage}%
} % 結束 makebox
\end{figure}

\vspace{0.2cm} \noindent
\textbf{Region I} covers the interior of the front face where $r = e^{-t}$ approaches 0. In this region, the ratio $s = \frac{x'}{x} = e^{t-t'}$ remains finite, which correctly captures the space containing the diagonal.

\vspace{0.2cm} \noindent
\textbf{Region II} isolates the behavior near the right boundary. The coordinate $s$ evaluates the regime where $s$ approaches 0.

\vspace{0.2cm} \noindent
\textbf{Region III} captures the behavior near the left boundary utilizing the coordinate $s'$. This models the opposite regime where $s'$ approaches 0.

\vspace{0.2cm} \noindent
More generally, it is crucial that $\mathbf{Q}(\tau, r, y, y')$ is smooth in $r$ up to $r = 0$ as a family of conormal distributions in $(\tau, y, y')$ with respect to $\{y = y'\}$.

 \subsection{The Indicial Roots}

 The indicial roots of the $b$-differential operator will determine the mapping properties of the parametrix. For the operator $\Delta + 1$, the roots are $\pm \sqrt{\lambda_j^2 + 1}$. To say a metric is exponentially close to $\mathbb{R} \times B$ means that as the coordinate $t$ increases, the metric $g$ on $M$ approaches the product metric $dt^2 + h$ . The rate of this convergence is what allows the global parametrix to have a compact remainder term.
 
This part explains why the structure of $\Delta_g + 1$ is easier to see.
 In the $b$-calculus, the asymptotic behavior of the kernel is dictated by the indicial operator at the boundary. For $\Delta_g$, \cite{DyLi24} showed the indicial operator is $I(\lambda) = \Delta_h - \lambda^2$. The roots are $\lambda = \pm \mu_j$ (where $\mu_j^2$ are the eigenvalues of the cross-section $Y$). The zero eigenvalue ($\mu_0 = 0$) creates a root at $\lambda = 0$, which ruins the Fredholm property on standard $L^2$. For our operator, $\Delta_g + 1$, the indicial operator becomes:\[I_{+1}(\lambda) = \Delta_h - \lambda^2 + 1 \tag{indicial operator} \label{indicial operator}
 \]
 The indicial roots are the solutions to $I_{+1}(\lambda) = 0$, which gives:\[
 \lambda_{j, \pm} = \pm \sqrt{\mu_j^2 + 1} \tag{indicial roots} \label{indicial roots}\]
 Because the spectrum of $\Delta_h$ is nonnegative ($\mu_j^2 \ge 0$), all indicial roots are strictly bounded away from zero ($\vert{}\lambda\vert{} \ge 1$). The entire spectral strip $-1 < \text{Re}(\lambda) < 1$ is completely free of roots. This means our operator is naturally Fredholm (and invertible) on standard unweighted spaces ($\alpha = 0$).
 
 \subsection{Kernel on the Front Face}
 In the blown-up double space, the front face geometrically represents the exact cylindrical limit at infinity. The kernel restricted to the front face, $\mathbf{Q}_{\text{ff}}$, is where our separation of variables calculation lives.
 
 To construct $\mathbf{Q}$ near ff (Region II and III), we use the inverse Mellin transform of the indicial operator:$$\mathbf{Q}_{\text{ff}} = \frac{1}{2\pi i} \int_{\text{Re}(\lambda) = \alpha} s^\lambda I_{+1}(\lambda)^{-1} \, d\lambda$$Because we have no roots near $0$, we can integrate straight down the imaginary axis ($\alpha = 0$). When we evaluate this integral using the residue theorem, we pick up the poles at the indicial roots $\lambda = \sqrt{\mu_j^2 + 1}$. This residue evaluation reconstructs our 1-D Green's function from the separation of variables: the $s^{\sqrt{\mu_j^2 + 1}}$ term in the $b$-coordinates is exactly the $e^{-\sqrt{\mu_j^2+1}\vert{}t-t'\vert{}}$ decay we found.
 
 \subsection{Asymptotics : Polyhomogeneity at the Side Boundaries}
 
 To say ``asymptotically, it has this behavior'' we need to describe the kernel $\mathbf{Q}$ as a polyhomogeneous conormal distribution on the blown-up space.

 \begin{itemize}
     \item [] Near the right boundary (rb $\cap$ ff)
      The coordinate is $s = \frac{x'}{x} \to 0$. The kernel will have an asymptotic expansion in powers of $s$. Because our indicial roots are $\sqrt{\mu_j^2 + 1}$, the expansion starts at $s^1$ (or $s^{\sqrt{\mu_0^2+1}}$).

    \item [] Near the left boundary (lb $\cap$ ff)
    The coordinate is $s' = \frac{x}{x'} \to 0$. The expansion starts at $(s')^{\sqrt{\mu_0^2+1}}$.

 \end{itemize}
 We can state that the parametrix $\mathbf{Q}$ is smooth in the interior, it has a standard pseudodifferential singularity along the lifted diagonal, and is polyhomogeneous at the side faces (lb and rb) with index sets strictly bounded.

Once we write down that the kernel $\mathbf{Q}$ has these polyhomogeneous bounds on $[\overline{M} \times \overline{M}; Y \times Y]$, we are done with the hard part. We can simply state that by the calculus of $b$-operators (see Melrose's note), the fact that $\mathbf{Q}$ is polyhomogeneous with index sets strictly greater than $0$ implies it acts as a bounded operator $L^2(M) \to H^2_b(M)$.

\subsection{Residue Theorem Calculation}

In this section, we present the residue theorem calculation that connects the Mellin inverse contour integral directly to our $e^{-\sqrt{\mu_j^2+1}\vert{}t-t'\vert{}}$ separation of variables sum, so that we can have some concrete algebraic link for the paper. We will see the front face Mellin inversion and residue calculation.

\subsubsection{The Mellin Integral Representation}

On the front face of the blown-up double space $[\overline{M} \times \overline{M}; Y \times Y]$, the normal operator of $\Delta_g + 1$ is the model cylinder operator $-\partial_t^2 + \Delta_h + 1$.

\vspace{0.2cm} \noindent
In the $b$-coordinates, the boundary defining functions are related to the cylindrical coordinate via $x = e^{-t}$ and $x' = e^{-t'}$. The relative coordinate $s = \frac{x'}{x} \in (0, \infty)$ satisfies:
\[s = e^{t - t'}, \quad \text{or equivalently} \quad \log s = t - t' \]
The front face kernel $\mathbf{Q}_{\text{ff}}(s, y, y')$ is obtained by taking the Mellin inverse transform of the inverse of the indicial operator $I_{+1}(\lambda)^{-1}$:
\[ \mathbf{Q}_{\text{ff}}(s, y, y') = \frac{1}{2\pi i} \int_{\text{Re}(\lambda) = 0} s^\lambda I_{+1}(\lambda)^{-1}(y, y') \, d\lambda 
\]
(By integrating the inverse indicial operator along the imaginary axis, we successfully generated the local representation of the conormal distribution at the front face.)

Recall that the indicial operator acts on the orthonormal eigenbasis $\{\phi_j(y)\}_{j=0}^\infty$ of $\Delta_h$ with eigenvalues $\mu_j^2 \geq 0$ as:
\[ I_{+1}(\lambda) \phi_j = (\Delta_h - \lambda^2 + 1)\phi_j = (\mu_j^2 + 1 - \lambda^2)\phi_j \]
Expanding $I_{+1}(\lambda)^{-1}$ in the spectral basis yields:
\[ I_{+1}(\lambda)^{-1}(y, y') = \sum_{j=0}^\infty \frac{\phi_j(y) \overline{\phi_j(y')}}{\mu_j^2 + 1 - \lambda^2} \]
Substituting this spectral expansion into the Mellin inverse integral and setting $\omega_j = \sqrt{\mu_j^2 + 1} \geq 1$, we interchange the sum and integration:
 
\[ \mathbf{Q}_{\text{ff}}(s, y, y') = \sum_{j=0}^\infty \phi_j(y) \overline{\phi_j(y')} \left( \frac{1}{2\pi i} \int_{\text{Re}(\lambda) = 0} \frac{s^\lambda}{\omega_j^2 - \lambda^2} \, d\lambda \right) \]

\subsubsection{Evaluating the Contour Integral via Residues}

Let \[J_j(s) = \frac{1}{2\pi i} \int \frac{s^\lambda}{\omega_j^2 - \lambda^2} \, d\lambda\]
We rewrite the integrand as:
 
\[ f(\lambda) = \frac{s^\lambda}{\omega_j^2 - \lambda^2} = -\frac{s^\lambda}{(\lambda - \omega_j)(\lambda + \omega_j)} \]
The integrand has two simple poles at $\lambda = \pm \omega_j = \pm \sqrt{\mu_j^2 + 1}$. Since $\omega_j \geq 1$, both poles lie off the imaginary axis $\text{Re}(\lambda) = 0$.

\vspace{0.2cm} \noindent
Note that $s^\lambda = e^{\lambda \log s} = e^{\lambda(t - t')}$. The choice of closing the integration contour in the complex plane depends on the sign of $\log s = t - t'$:

\subsubsection*{Case 1: $t > t'$ (i.e., $s > 1 \implies \log s > 0$)}
To ensure that the exponential term $e^{\lambda \log s}$ decays along the large arc $C_R$ as $R \to \infty$, we require $\text{Re}(\lambda) < 0$. Therefore, we close the contour in the left half-plane ($\text{Re}(\lambda) \leq 0$) using a counterclockwise semicircle $\gamma_L$:
\begin{itemize}
    \item [-] Enclosed pole: $\lambda = -\omega_j$.
    \item [-] By Jordan's Lemma, the integral over the arc $\gamma_L$ vanishes as $R \to \infty$.
\end{itemize}
By Cauchy's residue theorem:
\[ J_j(s) = \text{Res}_{\lambda = -\omega_j} f(\lambda) = \lim_{\lambda \to -\omega_j} (\lambda + \omega_j) \left( -\frac{s^\lambda}{(\lambda - \omega_j)(\lambda + \omega_j)} \right) = \frac{s^{-\omega_j}}{2\omega_j} \]
Substituting $s = e^{t - t'}$ into $s^{-\omega_j}$:
 
\[ J_j(s) = \frac{e^{-\omega_j(t - t')}}{2\omega_j} = \frac{e^{-\sqrt{\mu_j^2 + 1}|t - t'|}}{2\sqrt{\mu_j^2 + 1}} \quad (\text{since } t - t' > 0) \]

\subsubsection*{Case 2: $t < t'$ (i.e., $s < 1 \implies \log s < 0$)}
To ensure decay as $R \to \infty$, we require $\text{Re}(\lambda) > 0$. Therefore, we close the contour in the right half-plane ($\text{Re}(\lambda) \geq 0$) using a clockwise semicircle $\gamma_R$:
\begin{itemize}
    \item [-] Enclosed pole: $\lambda = +\omega_j$.
    \item [-] The clockwise orientation introduces a negative sign: $J_j(s) = -\text{Res}_{\lambda = +\omega_j} f(\lambda)$.
\end{itemize}
Computing the residue:
\[ \text{Res}_{\lambda = +\omega_j} f(\lambda) = \lim_{\lambda \to +\omega_j} (\lambda - \omega_j) \left( -\frac{s^\lambda}{(\lambda - \omega_j)(\lambda + \omega_j)} \right) = -\frac{s^{\omega_j}}{2\omega_j} \]
Thus:
\[ J_j(s) = -\left( -\frac{s^{\omega_j}}{2\omega_j} \right) = \frac{s^{\omega_j}}{2\omega_j} \]
Substituting $s = e^{t - t'}$ into $s^{\omega_j}$:
 
\[ J_j(s) = \frac{e^{\omega_j(t - t')}}{2\omega_j} = \frac{e^{-\sqrt{\mu_j^2 + 1}|t - t'|}}{2\sqrt{\mu_j^2 + 1}} \quad (\text{since } t - t' < 0 \implies t - t' = -|t - t'|) \]

\subsubsection{Reassembling the Front Face Parametrix}

Combining both cases, the 1-D Mellin inverse integral yields :
 
\[ J_j(s) = \frac{e^{-\sqrt{\mu_j^2 + 1}|t - t'|}}{2\sqrt{\mu_j^2 + 1}} \quad \forall t, t' \in \mathbb{R} \]
 Substituting $J_j(s)$ back into the expression for $\mathbf{Q}_{\text{ff}}$ completes the identity:
\[\mathbf{Q}_{\text{ff}}(t, t', y, y') = \sum_{j=0}^\infty \frac{e^{-\sqrt{\mu_j^2 + 1}|t - t'|}}{2\sqrt{\mu_j^2 + 1}} \phi_j(y) \overline{\phi_j(y')} \]
 This proves that the inverse of the indicial operator integrated along the Mellin contour on the front face matches the exact 1-D Green's function derived via separation of variables.

 Separation of variables is exact on the front face, but the front face is only a single ``slice'' of the geometric and analytical picture.
 When we restrict the problem to the front face, the geometry is the exact model cylinder $\mathbb{R} \times Y$. Because we are generalizing to a compact cross-section $Y$, the cross-sectional Laplacian $\Delta_h$ has a discrete spectrum of eigenfunctions $\phi_j(y)$ with eigenvalues $\mu_j^2$.

\subsection{Structural Analogy Between $\Delta_g$ and $\Delta_g + 1$}
The approach here is analogous to that in \cite{DyLi24}.

\renewcommand{\arraystretch}{1.5}
\begin{table}[h]
\centering
\begin{tabularx}{\textwidth}{>{\bfseries}p{0.18\textwidth}X X}
\hline
Step &
Dyatlov \& Li ($\Delta_g$) &
Our Project ($\Delta_g + 1$) \\
\hline

Reduction to ff &
Restrict to $r=0$ because perturbation terms containing $r\partial_r$ vanish at the front face (Eq.\ 3.4  in \cite{DyLi24}). &
Identical: $r\partial_r$ terms vanish at $r=0$, leaving the translation-invariant model operator. \\

\hline
Equation of ff &
\[
(-\partial_\tau^{2}+\Delta_y)\mathbf{Q}(\tau,y,y')
=\delta(\tau)\delta(y-y')
\]
(Eq.\ 3.3  in \cite{DyLi24}). &
\[
(-\partial_\tau^{2}+\Delta_y+1)\mathbf{Q}(\tau,y,y')
=\delta(\tau)\delta(y-y').
\]
\\

\hline
Dual Inversion &
Takes Fourier transform in $\tau=t-t'$ with dual variable $\sigma$ (Eq.\ 3.5  in \cite{DyLi24}). &
Takes Mellin transform in $\tau$ with dual variable $\lambda=i\sigma$. \\

\hline
Pole Locations &
Poles at $\sigma=\pm i\mu_j$. &
Poles shifted to $\sigma=\pm i\sqrt{\mu_j^2+1}$. \\

\hline
Integration Contour &
We must shift contour to $\mathbb{R}-ia$ because $\sigma=0$ is a pole (zero mode). &
No shift needed. Integration line is the real axis ($\operatorname{Im}(\sigma)=0$), since poles are bounded away from zero. \\

\hline
                                                    \end{tabularx}
\end{table}

\vspace{0.2cm}
\noindent
In Figure 4 of \cite{DyLi24}, we can see red dots lying directly on the real line $\text{Im}(\sigma) = 0$. This corresponds to the zero eigenvalue $\mu_0 = 0$ of the cross-sectional Laplacian $\Delta_y$. Because of this pole at $\sigma = 0$, they cannot integrate along the real line $\mathbb{R}$, they are forced to shift the contour to $\mathbb{R} - ia$ (the blue solid line). This shift is the reason why $\Delta_g$ is not Fredholm on standard unweighted $L^2$.

Unlike their setup in equation (3.5) of \cite{DyLi24}, our spectral shift $+1$ places the contour directly on the real axis without needing to avoid a zero mode pole at $\sigma = 0$.

Separation of variables gives the explicit formula for the model problem on an infinite, unperturbed cylinder. Mellin Inversion on $\text{ff}$ proves that the $b$-calculus machinery correctly reproduces this model solution on the front face of the blown-up space.The $b$-calculus takes that front face model $\mathbf{Q}_{\text{ff}}$, glues it to an interior elliptic parametrix $\mathbf{Q}_{\text{int}}$, handles the nonexact metric perturbations, and uses the residual decay of $\mathbf{R}$ to prove global $L^2(M) \to H^2_b(M)$ invertibility.

\vspace{1em} \noindent
In Melrose's book, everything is done in the $s$ variable (we can work on either $\tau$ or $s$ variable). So instead of Fourier transform, we take the Mellin transform, exponentiated version.

\newpage

 We have the $b$-Sobolev spaces$$H^s_b(M), \quad s \in \mathbb{R}$$ with metric $g$ (or more precisely, to the class of $b$-metrics). If $s \ge 0$ is an integer, we can define $b$-vector fields:$$H^s_b(M) = \{ f : V_1 \dots V_m f \in L^2(M; d\text{vol}_g) \text{ for all } V_1, \dots, V_m \in \mathcal{V}_b(M), m \le s \}.$$For example, on the exact cylinder $\mathbb{R}_t \times Y$ with the metric $dt^2 + h_y$, this just corresponds to applying the fields $\partial_t, \partial_y$ no more than $s$ times and requiring the result to remain in $L^2(\mathbb{R} \times Y)$. The Fredholm property of the Laplacian $\Delta_g$ does not hold in general on these unweighted spaces. The case of the real line is an example: the operator $-\partial_t^2$ is not Fredholm $H^2(\mathbb{R}) \to L^2(\mathbb{R})$ because its range $-\partial^2_t(H^1(\mathbb{R}))$ is not closed in $L^2(\mathbb{R})$. This failure stems directly from the existence of a double zero root in the indicial spectrum.
 
 To establish the Fredholm property, one typically (like what we see in the \cite{DyLi24}) introduces weighted $b$-Sobolev spaces$$x^\alpha H^s_b(M),$$where the order $\alpha \in \mathbb{R}$ of the space dictates the allowable growth or decay at infinity. For example, on the exact cylindrical end $x = e^{-t}$, this space corresponds to $e^{\alpha t}H^s(\mathbb{R} \times Y)$. To determine the exact conditions on $\alpha$ required for invertibility, we look for solutions to the asymptotic equation $(\Delta_g + 1)u = 0$ in the infinite end which have the form$$u(x, y) = x^\lambda v(y) = e^{-\lambda t}v(y).$$Here $\lambda \in \mathbb{C}$ is a constant. The action of our shifted operator $\Delta_g + 1$ on such functions is described by its indicial operator, which is a $\lambda$-dependent family of second-order elliptic partial differential operators $I_{+1}(\lambda)$ on the boundary $Y = \partial \overline{M}$ defined as follows:$$x^{-\lambda}(\Delta_g + 1)(x^\lambda v(y)) = I_{+1}(\lambda)v(y) + \mathcal{O}(x) \quad \text{as } x \to 0.$$Using the exact cylindrical model $-\partial_t^2 + \Delta_h + 1$, we explicitly compute this action as:$$I_{+1}(\lambda) = -\lambda^2 + \Delta_h + 1.$$The operator $\Delta_g + 1$ acts as a Fredholm operator $x^\alpha H^2_b(M) \to x^\alpha L^2_b(M)$ if and only if the weight $\alpha$ does not intersect the real parts of any roots of the indicial operator. We must analyze the indicial roots satisfying $I_{+1}(\lambda)v = 0$. Letting $v(y) = \phi_j(y)$ be the orthonormal eigenfunctions of the cross-sectional Laplacian with eigenvalues $\mu_j^2 \ge 0$, the roots are exactly given by:$$\lambda = \pm\sqrt{\mu_j^2 + 1}.$$

Because of the $+1$ shift, the roots are located at $\pm 1$. There is no double zero. Instead, the positive and negative indicial roots are strictly separated by a spectral gap of width $2$ centered on the imaginary axis. We can simply draw a line between roots to define our integration contour since our indicial roots are completely separated. Any weight $\alpha$ chosen strictly between the positive and negative roots $$-1 < \alpha < 1$$avoids the poles entirely. By placing our contour within this pole-free strip (most naturally on the imaginary axis, which corresponds to the unweighted space where $\alpha = 0$), the Mellin integral converges absolutely. This clean separation guarantees that the resulting parametrix maps the standard, unweighted space $L^2(M) \to H^2_b(M)$ as a bounded Fredholm operator. The resulting kernel is conormal to the diagonal smoothly up to the front face, and vanishes to first order at both the left and right boundaries.

Since the cross-sectional eigenvalues are nonnegative, the shifted operator $\Delta_h + 1$ has strictly positive eigenvalues. Consequently, there is no double root at zero. The indicial roots are strictly bounded away from the imaginary axis, with the roots located at $\pm 1$. Because our indicial roots are completely separated by a spectral gap, we can safely draw our contour line (which corresponds to choosing the weight $\alpha$) anywhere within this gap. In particular, the unweighted space $L^2(M)$, which corresponds to $\alpha = 0$, falls into the permissible strip $-1 < \alpha < 1$. Thus, unlike the pure Laplacian, our operator $\Delta_g + 1$ is Fredholm on the standard, unweighted $b$-Sobolev spaces.

\subsection{Asymptotic Expansions}
\subsubsection{Expansion Index}

The expansion index sets $\mathcal{E}_{\text{lb}}$ and $\mathcal{E}_{\text{rb}}$ dictate the asymptotic behavior of the parametrix near the left and right boundaries of the blown-up space. For a general $b$-operator, an expansion index is formally defined as a two-dimensional subset, typically residing in $\mathbb{C} \times \mathbb{N}_0$. This two-dimensional structure is required to track both the polynomial exponent $\lambda$ and the integer power $k$ of any associated logarithmic terms $(\log s)^k$, which inevitably arise from the indicial roots (such as a double zero).%%inevitably arise from the collision of indicial roots (such as a double zero). 

\noindent
 In the definition $\mathbb{C} \times \mathbb{N}_0$, the second coordinate tracks the integer power $k$ of the logarithmic term $(\log s)^k$. A simple pole generates a term with no logarithms, which corresponds exactly to $(\log s)^0$. If we restrict this definition to $\mathbb{N}$, we formally assert that $k \ge 1$. This implies every single term in the asymptotic expansion must contain at least a $\log s$, which breaks the general definition and completely invalidates our specific case where $k$ is uniformly zero.

Because of these potential logs, the most general parametrix expansion is structurally complex. However, we can express a simpler expansion devoid of logarithmic terms, as presented in Proposition 3.1 of \cite{DyLi24}:
\[
    \mathbf{Q}(x, s) \sim \sum_{\lambda \in \mathcal{E}_{\text{rb}}} s^\lambda v_s(x)
\]
It is crucial to specify that this represents a special, simplified case and is not what the most general parametrix looks like. 

Our shifted operator $\Delta_g + 1$ perfectly aligns with this simplest case. Because our indicial roots are distinct simple poles separated by a spectral gap, there are no double roots. Consequently, the native parametrix for our operator generates no logarithmic terms, and the logarithmic power $k$ is uniformly zero.

By placing the Mellin contour directly on the imaginary axis, the contour cleanly splits the positive and negative indicial roots. Because there are no logs, our expansion index sets effectively reduce to the one-dimensional discrete sets of these shifted roots:
\[
    \mathcal{E}_{\text{rb}} = \left\{ \sqrt{\mu_j^2 + 1} \;\middle|\; j \in \mathbb{N}_0 \right\}
\]
\[
    \mathcal{E}_{\text{lb}} = \left\{ \sqrt{\mu_j^2 + 1} \;\middle|\; j \in \mathbb{N}_0 \right\}
\]
The spectrum of the positive Laplacian $\Delta_B$ on a compact cross-section always includes the constant mode, meaning the lowest eigenvalue is $\mu_0^2 = 0$. If we use $\mathbb{N}$ (starting at 1) for the eigenvalue index $j$, we accidentally omit this bottom of the spectrum, which corresponds to the crucial minimal shifted root $\sqrt{0 + 1} = 1$. To include this root, the index must go as $j = 0, 1, 2, \dots$.

\subsubsection{The Leading Order / Minimum $E_{rb}, E_{lb}$}
The $E_{\text{lb}}$ and $E_{\text{rb}}$ are simply the minimums of those index sets:$$E_{\text{rb}} = \min \mathcal{E}_{\text{rb}} = \sqrt{\mu_0^2 + 1} = 1$$While the index set $\mathcal{E}$ tells us every term in the infinite Taylor-like series, the minimum $E$ is just the very first term. It dictates the leading order of vanishing, which is all we actually need to determine the Sobolev space mapping properties (the $\alpha$ and $\beta$ bounds).

\subsubsection{Cutting the Indicial Roots}

Let's see exactly how the contour cuts the indicial roots to form $\mathcal{E}_{\text{rb}}$ and $\mathcal{E}_{\text{lb}}$, and explains the logarithmic terms when solving $(\Delta_g + 1)u = f$.
To fully characterize the parametrix $\mathbf{Q}$, we must define its polyhomogeneous expansions at the left and right boundaries of the blown-up double space. The index sets $\mathcal{E}_{\text{lb}}$ and $\mathcal{E}_{\text{rb}}$ governing these expansions are determined precisely by the indicial roots and our choice of integration contour.

%by the indicial roots and our choice of integration contour in the Mellin plane.

% vertical contour $\Re(\lambda) = \alpha$ in the complex plane
Recall that the indicial roots of $\Delta_g + 1$ are perfectly separated:$$\lambda_j^\pm = \pm \sqrt{\mu_j^2 + 1}$$When we invert the indicial operator on the front face, we draw a contour in the complex plane to evaluate the inverse Mellin transform. As demonstrated in Proposition 3.1, the asymptotic expansion at the right boundary is determined by closing the contour in one half-plane, thereby capturing the poles on that side. The expansion at the left boundary  is determined by closing the contour in the opposite half-plane. By choosing our contour on the imaginary axis ($\alpha = 0$), we cleanly cut the roots into two disjoint sets.
\begin{itemize}
    \item [] The right boundary: The expansion here is dictated by the positive roots residing to the right of the contour. Thus, the base index set is generated by $\lambda_j^+ = +\sqrt{\mu_j^2 + 1}$.
    \item [] The left boundary: The expansion here is dictated by the negative roots to the left of the contour. Because the boundary defining function $s'$ is inverted relative to $s$, these negative roots translate into positive powers, generating an index set corresponding to $\vert{}\lambda_j^-\vert{} = +\sqrt{\mu_j^2 + 1}$.
\end{itemize}
Let $E_{\text{lb}} = \min \mathcal{E}_{\text{lb}}$ and $E_{\text{rb}} = \min \mathcal{E}_{\text{rb}}$ denote the leading order of vanishing at these boundaries. Because the cross-sectional eigenvalues satisfy $\mu_j^2 \ge 0$, the minimal roots are strictly bounded away from zero:$$E_{\text{lb}} = 1, \quad E_{\text{rb}} = 1$$

\subsubsection{Mapping Properties from the Index Sets}
%This geometric data directly verifies our allowable
Let's verify our allowable Sobolev weights. According to Proposition 3.1 in \cite{DyLi24}, the operator maps $x^\alpha H^s_b \to x^\beta H^{s+N}_b$ provided $\beta \le \alpha$, $\alpha > -E_{\text{rb}}$, and $\beta < E_{\text{lb}}$. Substituting our leading orders, we require:$$\alpha > -1 \quad \text{and} \quad \beta < 1$$This recovers the exact stability strip $-1 < \alpha < 1$ required for the unweighted $L^2 \to H^2_b$ Fredholm property. Furthermore, when analyzing the parametrix remainder $\mathbf{R}$, Proposition 3.2  in \cite{DyLi24} guarantees that if the kernel vanishes to infinite order at the front face, the restriction $\beta \le \alpha$ is lifted, allowing the remainder to strictly improve the decay rate.

We defined $E_{\text{lb}}$ and $E_{\text{rb}}$ as positive minimums, which aligns with formula in \cite{DyLi24} that $\alpha > -E_{\text{rb}}$ and $\beta < E_{\text{lb}}$.

\subsection{Full Parametrix Asymptotics}

For the resolvent family and spectral poles, \cite{DyLi24} setup, the foundational component for Region I is the meromorphic family of operators $(\sigma^2 + \Delta_y)^{-1}$. For our project, because we are studying the shifted operator $\Delta_g + 1$, our resolvent family on the cylindrical end is given by:$$(\sigma^2 + \Delta_y + 1)^{-1}$$or equivalently in terms of the Mellin parameter $\lambda = i\sigma$:$$(-\lambda^2 + \Delta_y + 1)^{-1}$$We define the corresponding $b$-Spectrum for our operator as the set of points where the inverse fails to exist:$$\text{Spec}_{\text{b}}(\Delta_g + 1) = \{ \sigma \in \mathbb{C} \mid \sigma^2 + \Delta_y + 1 \text{ is not invertible} \}$$

\noindent As we see in \cite{Mel93}, the set of $\text{Spec}_{\text{b}}(\Delta_g+1)$ is the indicial roots of $\Delta_g+1$. This convention is designed so that the indicial family is simply given by the Fourier transform of the Schwartz kernel restricted to front face using the $\tau$ coordinates.

The poles of our resolvent family $(\sigma^2 + \Delta_y + 1)^{-1}$ occur precisely when $\sigma^2 = \mu_j^2 + 1$, yielding the poles at:$$\sigma = \pm i\sqrt{\mu_j^2 + 1}$$

We should also discuss the absence of the double pole at $\sigma = 0$, in pure Laplacian case, the cross-sectional eigenvalue $\mu_0 = 0$ creates a pole at $\sigma = 0$ of order 2, which requires the Laurent expansion:\[
(\sigma^2 + \Delta_y)^{-1} = \frac{\Pi_{-z^2}}{\sigma^2 - z^2} + R_{\text{hol}}(\sigma)
\label{Laurent expansion} \tag{Laurent expansion}
\]
containing the double pole responsible for generating logarithmic terms. 

\vspace{1em}
\noindent
For our shifted operator $\Delta_g + 1$, the situation is simpler since the cross-sectional eigenvalues satisfy $\mu_j^2 \ge 0$, our lowest squared eigenvalue is $\mu_0^2 + 1 = 1$. Therefore, $\sigma = 0$ is never a pole. All poles are simple: Every pole in our resolvent family is strictly located at $\sigma = \pm i\sqrt{\mu_j^2 + 1}$ and has order 1. There are no higher-order poles anywhere in the spectrum.

\vspace{1em}
\noindent\textbf{Remark.} If the poles are simple, then the term $(\log s)^k$ in equation (3.22) in \cite{DyLi24} disappears. In that case, the only nonsimple pole is the pole at zero, which has order 2. We only pick up this log term at zero. We don't pick up log term at other places, we make it simple.
\noindent It will be a lot more easier when we see the exact formula, which is \eqref{Laurent expansion}. So if we apply the residue theorem to this, we see what exactly these $A_{z,k}$ are. They are just projections on eigenspaces.\\

\subsubsection{Behavior of the Parametrix $\mathbf{Q}$ Near the Right Boundary and Left Boundary}
To analyze the behavior of the parametrix $\mathbf{Q}$ near the right boundary for the shifted operator $\Delta_g + 1$, we evaluate the resolvent integral in the coordinates $s = e^\tau$ as $s \to 0$. For the integral representation, we use Region II coordinates where $s < 1$, we mirror Dyatlov's equation (3.20). The parametrix is given by the inverse Fourier transform of the shifted resolvent:$$\mathbf{Q}(s) = \frac{1}{2\pi} \int_{\mathbb{R}-ia} s^{i\sigma} (\sigma^2 + \Delta_B + 1)^{-1} d\sigma$$By integrating by parts, this operator is smoothing on the compact cross-section $B$ away from the diagonal and satisfies the bound $\Vert{}\mathbf{Q}(s)\Vert{}_{H^\ell(B) \to H^{\ell+N}(B)} \lesssim s^a$ as $s \to 0$.

Let's see the contour deformation and general expansion. To extract the precise asymptotics towards right boundary, we deform the contour down to $\mathbb{R} - ia'$, crossing the poles of the resolvent in the lower half-plane. Adapting Dyatlov's equation (3.22) directly to our spectrum yields:$$\mathbf{Q}(s) = \sum_{\substack{z \in \text{Spec}_{\text{b}}(\Delta_g+1) \\ -a' < \text{Im } z < -a \\ k \le \text{ord}(z) - 1}} s^{iz}(\log s)^k A_{z,k} + \int_{\mathbb{R}-ia'} s^{i\sigma} (\sigma^2 + \Delta_B + 1)^{-1} d\sigma$$

\noindent
As for the absence of the logarithmic terms, our shifted operator drastically simplifies the resulting expansion structure:
The poles in the lower half-plane are precisely located at $z_j = -i\sqrt{\mu_j^2 + 1}$.
 Because the eigenvalues satisfy $\mu_j^2 \ge 0$, the closest pole to the real axis is safely distanced at $\text{Im } z_0 = -1$.
The value of $z_j$ is never zero. Consequently, every single pole in our resolvent is strictly simple, meaning $\text{ord}(z_j) = 1$ universally.
Because the upper bound for the logarithmic power is $k \le \text{ord}(z_j) - 1$, the index $k$ is forced to be exactly $0$ for all terms.

Let's see the final asymptotics. Near the right boundary, substituting our simple poles $z_j$ into the exponent $s^{iz}$ yields $s^{i(-i\sqrt{\mu_j^2 + 1})} = s^{\sqrt{\mu_j^2 + 1}}$. The expansion collapses into a clean, log-free sum of pure polynomial powers:$$\mathbf{Q}(s) = \sum_{a < \sqrt{\mu_j^2 + 1} < a'} s^{\sqrt{\mu_j^2 + 1}} A_j + \mathbf{R}(s)$$The remainder integral $\mathbf{R}(s)$ decays faster than $s^{a'}$ as $s \to 0$. The coefficients $A_j$ remain smoothing operators, defined as multiples of the orthogonal projections onto the eigenspaces of $\Delta_B$.

The part of  left boundary will be quite similar as following.
While the complex contour integration differs in direction, the final asymptotic behavior of the parametrix $\mathbf{Q}$ near both boundaries is perfectly symmetric. The coordinate becomes $s' = e^{-\tau} \to 0$. The contour deformation is now the upper half-plane with the poles $+i\sqrt{\mu_j^2 + 1}$. Exponent $(s')^{-iz_j}$ and we have the final asymptotic power $(s')^{\sqrt{\mu_j^2 + 1}}$.

The complex analysis is perfectly mirrored. We deform the contour down to capture the negative imaginary poles for right boundary, and we deform it up to capture the positive imaginary poles for left boundary.

The negative sign in the left boundary exponent formula $(s')^{-iz_j}$ perfectly cancels the positive sign of the upper half-plane poles.
Because of this cancelation, the final asymptotic behavior is completely identical at both boundaries. The parametrix $\mathbf{Q}$ decays at the exact same polynomial rate as we approach either the left or right boundary of the blown-up space.

\begin{proposition} The real axis contains no indicial roots, so $(\mathbb{R} + i0) \cap \text{Spec}_{\text{b}}(\Delta_g + 1) = \emptyset$. Setting $a=0$, the orders of vanishing are explicitly bounded by the spectral gap:
\[
    E_{\text{rb}} := -\max\{\text{Im } z : z \in \text{Spec}_{\text{b}}(\Delta_g + 1), \text{Im } z < 0\} = 1,
\]
\[
    E_{\text{lb}} := \min\{\text{Im } z : z \in \text{Spec}_{\text{b}}(\Delta_g + 1), \text{Im } z > 0\} = 1.
\]
Fix $\chi \in C^\infty_c([0,1))$ be such that $\chi(r) = 1$ near $r = 0$ and $\chi(r)$ as a function on $M$ is supported in the exact cylindrical end. Define the cutoff parametrix
\[
    \widetilde{\mathbf{Q}}(\tau, r, y, y') := \frac{1}{2\pi} \chi(r) \int_{\mathbb{R}} e^{i\sigma\tau}(\sigma^2 + \Delta_B + 1)^{-1} d\sigma.
\]
Then
\[
    (\Delta_g + 1)\widetilde{\mathbf{Q}}(\tau, r, y, y') = \delta(\tau)\delta(y - y') \quad \text{in a small neighborhood of ff},
\]
and
\[
    \widetilde{Q} : x^\alpha H^\ell_{\text{b}}(M) \to x^\beta H^{\ell+2}_{\text{b}}(M)
\]
is bounded for all $\beta \le \alpha$, $\alpha > -1$, and $\beta < 1$, where $\widetilde{Q}$ is the operator with kernel $\widetilde{\mathbf{Q}}$.

\end{proposition}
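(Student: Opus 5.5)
The proof splits into three parts: the location of the $b$-spectrum, the local equation near ff, and the mapping property, which I will reduce to Proposition 3.1 of \cite{DyLi24}.

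\textbf{Spectrum and vanishing orders.} First identify $\text{Spec}_{\text{b}}(\Delta_g+1)$ explicitly. Expand $(\sigma^2+\Delta_B+1)^{-1}$ in the orthonormal eigenbasis $\{\phi_j\}$ of $\Delta_B$:
\[
(\sigma^2+\Delta_B+1)^{-1}=\sum_{j\ge 0}\frac{\phi_j(y)\overline{\phi_j(y')}}{\sigma^2+\mu_j^2+1}.
\]
The poles are exactly $\sigma=\pm i\sqrt{\mu_j^2+1}$. For real $\sigma$ we have $\sigma^2+\mu_j^2+1\ge 1$, so the resolvent is invertible on the real axis, with uniform operator-norm bound $(\sigma^2+1)^{-1}$. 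Hence $(\mathbb{R}+i0)\cap\text{Spec}_{\text{b}}=\emptyset$, and $a=0$ is an admissible contour. The poles with $\text{Im}\,z<0$ are $-i\sqrt{\mu_j^2+1}$, whose maximal imaginary part is $-\sqrt{\mu_0^2+1}=-1$ since $\mu_0=0$ (the constants on compact $B$). This gives $E_{\text{rb}}=1$, and symmetrically $E_{\text{lb}}=1$.

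\textbf{Equation near ff.} On the support of $\chi(r)$ the metric is exactly the product $dt^2+h$. In the coordinates $(\tau,r,y,y')$ with $\tau=t-t'$, the operator $\Delta_g+1$ acting in the left variables is $-\partial_\tau^2+\Delta_y+1$ plus terms carrying $r\partial_r$. These $r\partial_r$ terms annihilate $\widetilde{\mathbf{Q}}$ wherever $\chi\equiv 1$, i.e. near $r=0$, which is why the claimed identity is local there.

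Since the resolvent is holomorphic in a strip $|\text{Im}\,\sigma|<1$, the $\sigma$-integral defines a tempered distribution in $\tau$. Applying $-\partial_\tau^2$ multiplies the integrand by $\sigma^2$, so
\[
(-\partial_\tau^2+\Delta_y+1)\frac{1}{2\pi}\int_{\mathbb{R}} e^{i\sigma\tau}(\sigma^2+\Delta_B+1)^{-1}\,d\sigma=\frac{1}{2\pi}\int_{\mathbb{R}}e^{i\sigma\tau}\,d\sigma\cdot\mathrm{Id}_B=\delta(\tau)\delta(y-y').
\]
Equivalently, the same identity follows from the residue computation of Section 4.5, which shows that the kernel is exactly the separated Green's function $\sum_j \frac{e^{-\omega_j|\tau|}}{2\omega_j}\phi_j\overline{\phi_j}$.

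\textbf{Mapping property.} This is the main obstacle, and the plan is to verify the hypotheses of Proposition 3.1 in \cite{DyLi24} rather than prove boundedness from scratch.

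1. \emph{Conormality at the lifted diagonal.} The resolvent $(\sigma^2+\Delta_B+1)^{-1}$ is a classical pseudodifferential operator of order $-2$ in $(y,\sigma)$ jointly with parameter. This gives the conormal diagonal singularity of order $-2$, uniformly and smoothly in $r$ up to ff, because the kernel is independent of $r$ apart from the factor $\chi$.

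2. \emph{Decay away from the diagonal.} Deform the contour to $\mathbb{R}\mp ia'$ for $|\tau|$ large, as in the contour-deformation argument preceding the statement. The first residue terms produce the expansions $s^{\sqrt{\mu_j^2+1}}$ at rb and $(s')^{\sqrt{\mu_j^2+1}}$ at lb with no logarithms, because all poles are simple. The kernel is therefore polyhomogeneous with leading orders $E_{\text{rb}}=E_{\text{lb}}=1$.

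3. \emph{Support.} The factor $\chi(r)$ makes the kernel supported near ff, so there is no separate interior or far-field contribution to control.

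With these inputs, Proposition 3.1 of \cite{DyLi24} gives boundedness $x^\alpha H^\ell_{\text{b}}\to x^\beta H^{\ell+2}_{\text{b}}$ under the conditions $\beta\le\alpha$, $\alpha>-E_{\text{rb}}=-1$ and $\beta<E_{\text{lb}}=1$.

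The delicate point is the uniformity of the symbol estimates for the parameter-dependent resolvent when the contour is shifted into the strip. I would check this by the integration-by-parts bound $\|\mathbf{Q}(s)\|_{H^\ell(B)\to H^{\ell+N}(B)}\lesssim s^{a}$ quoted earlier, for any $0\le a<1$.
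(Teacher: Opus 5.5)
Your proposal is correct and follows essentially the paper's route. You identify $\text{Spec}_{\text{b}}(\Delta_g+1)=\{\pm i\sqrt{\mu_j^2+1}\}$ to get $E_{\text{rb}}=E_{\text{lb}}=1$, verify the front-face equation by Fourier inversion in $\tau$ (equivalently, the residue computation reproducing the separated Green's function), and deduce the mapping property from Proposition~3.1 of \cite{DyLi24} once contour deformation past the simple poles gives log-free expansions with leading power $1$ at lb and rb; your account is, if anything, more explicit than the paper's surrounding discussion, for instance the uniform bound $(\sigma^2+1)^{-1}$ on the real axis and the observation that the $r\partial_r$ terms only fall on $\chi$.
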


\vspace{1em}
Let's see full parametrix asymptotics in region I, as the integration by parts we see in \cite{DyLi24} (such as Equation 3.17), the Schwartz kernel $\mathbf{Q}(\tau, y, y')$ in Region I is evaluated via contour integration over $\sigma$:$$\mathbf{Q}(\tau, y, y') = \frac{1}{2\pi} \int_{\mathbb{R} - i\alpha} e^{i\sigma \tau} (\sigma^2 + \Delta_y + 1)^{-1} \, d\sigma$$Because all poles of our resolvent are simple, applying Cauchy's residue theorem as $\tau \to \infty$ does not produce any logarithmic terms induced by derivative (unlike the $\log s$ factors that arise from double poles). Instead, the full parametrix asymptotics simplify to a pure polyhomogeneous expansion consisting exclusively of exponential decay terms:$$\mathbf{Q}(\tau, y, y') \sim \sum_{j=0}^{\infty} e^{-\vert{}\tau\vert{}\sqrt{\mu_j^2 + 1}} \Pi_j(y, y')$$where $\Pi_j(y, y')$ are the smooth, finite-rank orthogonal projections onto the eigenspaces of $\Delta_y$ corresponding to the eigenvalue $\mu_j^2$.

In \cite{DyLi24}, the full asymptotics require tracking logarithmic residues due to the zero mode. For our $\Delta_g + 1$ project, the $+1$ spectral gap ensures that the full parametrix asymptotics are entirely free of logarithmic terms, yielding a clean, classical exponential decay expansion governed exclusively by the shifted indicial roots $\sqrt{\mu_j^2 + 1}$.

\newpage

\section{Mapping Properties}

\noindent
For the mapping property here, we can see two theorems from \cite{Mel93}. The point is to know more about the boundedness properties of the parametrix on the $b$-Sobolev spaces.

\subsection{Mapping Properties of $b$-pseudodifferential Operators}
The following theorem is Theorem 5.34 from \cite{Mel93}.

\newtheorem{theorem}{Theorem}[section]
\begin{theorem} \ Any $A \in \widetilde{\Psi}_{\text{b}}^{m,\mathcal{E}}(X; {}^b\Omega^{\frac{1}{2}})$, $m \in \mathbb{Z}$, is bounded as an operator
\begin{equation*}
\begin{gathered}
A: x^\alpha H_{b}^M(X; {}^b\Omega^{\frac{1}{2}}) \longrightarrow x^\beta H_{b}^{M-m}(X; {}^b\Omega^{\frac{1}{2}}), \ M \in \mathbb{Z}, \\
\text{provided } \beta \leq \alpha, \ \alpha + \inf E_{\mathrm{rb}} > 0, \ \inf E_{\mathrm{lb}} > \beta.
\end{gathered}
\end{equation*}

\end{theorem}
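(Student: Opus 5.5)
The plan is to split $A$ into a small-calculus piece and a residual piece and treat each by a different mechanism. By the structure of $\widetilde{\Psi}_{\text{b}}^{m,\mathcal{E}}$ I would write
\[
A = A_0 + A_1, \qquad A_0 \in \Psi_{\text{b}}^{m}(X;{}^b\Omega^{\frac12}), \qquad A_1 \in \Psi_{\text{b}}^{-\infty,\mathcal{E}}(X;{}^b\Omega^{\frac12}).
\]
Here $A_0$ has kernel supported near the lifted diagonal and vanishing to infinite order at lb and rb. $A_1$ has a smooth kernel on $M_b^2$, polyhomogeneous at lb, rb and ff with index sets $\mathcal{E}_{\text{lb}}$, $\mathcal{E}_{\text{rb}}$ and $\mathcal{E}_{\text{ff}} \supseteq \mathbb{N}_0$.

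First I reduce everything to the unweighted, order-zero case. Boundedness $x^\alpha H^M_b \to x^\beta H^{M-m}_b$ is equivalent to boundedness of $x^{-\beta} A x^{\alpha}$ on $H^M_b \to H^{M-m}_b$. For the small part I write $x^{-\beta}A_0x^{\alpha} = x^{\alpha-\beta}\,(x^{-\alpha}A_0x^{\alpha})$. The lifted factor $(x'/x)^{\alpha}$ is smooth and nonvanishing on the support of the kernel of $A_0$, since it is a power of $s$ near the diagonal. Hence $x^{-\alpha}A_0x^{\alpha}$ stays in $\Psi_{\text{b}}^{m}$, and $x^{\alpha-\beta}$ is bounded because $\beta\le\alpha$. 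Since $M$ is an integer, the Sobolev orders can be handled by composing with $b$-differential operators (sums of products of $b$-vector fields) and their parametrices inside the small calculus. This leaves $L^2$-boundedness of $\Psi^0_{\text{b}}$. On the cylindrical end, in the coordinates $(t,y)$, a small-calculus operator is a uniformly properly supported pseudodifferential operator with uniformly bounded symbol estimates. Its $L^2$-boundedness then follows either from Calder\'on--Vaillancourt or from H\"ormander's square-root trick: write $C-A^*A = B^*B + R$ with $R\in\Psi^{-\infty}_{\text{b}}$, and handle $R$ by the Schur test below.

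For the residual part $A_1$ the kernel is smoothing, so the orders $M$, $M-m$ only matter through $b$-vector fields. Applying $V\in\mathcal{V}_b$ on the left lifts to a $b$-vector field on $M_b^2$ tangent to all faces, which preserves the index sets. Applying $V$ on the right is handled by transposition, which swaps lb and rb. Negative $M$ is handled by duality. So it suffices to prove $L^2_b$-boundedness of $x^{-\beta}A_1x^{\alpha}$, and for this I would use the Schur test with the $b$-density $\frac{dx}{x}dy$.

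The conjugated kernel behaves like $x'^{\,\alpha+E_{\text{rb}}}$ near rb, and like $x^{E_{\text{lb}}-\beta}$ near lb, up to logarithmic factors and an $\epsilon$ loss. Near ff it equals $x^{\alpha-\beta}$ times a function of $s$ that decays at both ends of the front face. The hypotheses $\alpha+\inf E_{\text{rb}}>0$ and $\inf E_{\text{lb}}>\beta$ are exactly what make $\sup_x\int |K|\,\frac{dx'}{x'}$ and $\sup_{x'}\int |K|\,\frac{dx}{x}$ finite. The strictness of these inequalities absorbs the log terms $(\log s)^k$. For $\Delta_g+1$ there are none in any case, since $E_{\text{lb}}=E_{\text{rb}}=1$ and all poles are simple.

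The main obstacle is the uniform Schur estimate at the corners $\text{lb}\cap\text{ff}$ and $\text{rb}\cap\text{ff}$. There I would work in projective coordinates $(x', s)$ near $\text{rb}\cap\text{ff}$ (with $x'$ defining ff and $s$ defining rb), and symmetrically near $\text{lb}\cap\text{ff}$. In these coordinates the kernel is bounded by $C\,s^{\alpha+E_{\text{rb}}-\epsilon}\,(x')^{\alpha-\beta}$, and I would check that both Schur integrals reduce to $\int_0^1 s^{\gamma}\,\frac{ds}{s}<\infty$ with $\gamma>0$.
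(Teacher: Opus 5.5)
Your proposal is correct and is essentially the paper's (Melrose's) argument. It uses the same split into a small-calculus piece and a residual piece, conjugation by powers of $x$ to remove the weights, and H\"ormander's square-root trick for $L^2$-boundedness of $\Psi^0_{\text{b}}$; for the residual piece, your Schur test with the $b$-density is the same estimate as the paper's Cauchy--Schwarz with split weights $s^{\pm r}$ in projective coordinates.

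One slip needs correcting. Near $\mathrm{rb}\cap\mathrm{ff}$ the projective coordinates are $(x, s=x'/x)$, with $x$ defining ff, not $x'=xs$, which vanishes on both faces. So the bound there is $C\,x^{\alpha-\beta}s^{\alpha+E_{\mathrm{rb}}-\epsilon}$. Near $\mathrm{lb}\cap\mathrm{ff}$ you must switch to $(x', s'=x/x')$ and get $C\,(x')^{\alpha-\beta}(s')^{E_{\mathrm{lb}}-\beta-\epsilon}$. With the single prefactor $x^{\alpha-\beta}$, the $s$-factor need not decay as $s\to\infty$ when $E_{\mathrm{lb}}\le\alpha$. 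Since these prefactors are $\le 1$ for $\beta\le\alpha$, both Schur integrals at each corner then reduce to $\int_0^1 s^{\gamma}\,ds/s$ with $\gamma>0$, as you claim.
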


\begin{proof}
We can start with the decomposition of the operator.
    Decompose $A \in \widetilde{\Psi}_{\text{b}}^{m, \mathcal{E}}$ into a small calculus component $A_{\text{small}} \in \Psi_{\text{b}}^m$ and a residual component $R \in \widetilde{\Psi}_{\text{b}}^{-\infty, \mathcal{E}}$ of order $-\infty$.

We see that Melrose used the idea of small calculus boundedness $\Psi_{\text{b}}^m$.
(The small calculus of b pseudodifferential operators reduces to the ordinary calculus of pseudodifferential operators when the compact manifold has no boundary. In microlocal analysis, it is used to invert elliptic operators. See \cite{Mel93} for more details.)
Invariance under conjugation by complex powers $x^z$ of the defining function reduces the problem to unweighted spaces ($\alpha = \beta = 0$).
   Using Hörmander's symbolic construction, extract an approximate square root $-A^* A + C = B^* B - R$ with $B \in \Psi_{\text{b}}^0$.
 This reduces boundedness $A: H_{b}^M \to H_{b}^{M-m}$ to the case $M = m = 0$ and the remaining order $-\infty$ term. For the residual reduction ($m = -\infty$), we conjugate by powers of $x$ to reduce the general weight case to $\alpha = \beta = 0$, transforming the boundary constraints to $\inf E_{\text{rb}} > 0$ and $\inf E_{\text{lb}} > 0$.
The boundedness claim simplifies to establishing the quadratic form bound $|\langle Ru, u \rangle| \le C \|u\|_{L^2_{\text{b}}}^2$.
    
Next we do kernel localization and use weighted Cauchy--Schwarz estimate.
   Localize the kernel $\kappa(x, s, y, y')$ to a neighborhood of the front face and near a boundary face using cutoff functions and adjoint symmetry.
    Express the inner product in projective $b$-coordinates $(x, s = x'/x, y, y')$:
        \[
            \langle Ru, u \rangle = \int_\delta^\infty \int_0^\epsilon \int_{\partial X \times \partial X} u(x,y) \kappa(x, s, y, y') u(x/s, y') \frac{ds}{s} \frac{dx}{x} dy \, dy'.
        \]
      Insert split weights $s^{-r}$ and $s^r$ for a small parameter $r > 0$ such that $0 < r < \min\{\inf E_{\text{rb}}, \inf E_{\text{lb}}\}$, and apply the Cauchy--Schwarz inequality:
        \[
            |\langle Ru, u \rangle|^2 \le  \iiint |s^{-r} \kappa(x,s,y,y') u(x,y)|^2 \frac{ds}{s} \frac{dx}{x} dy \, dy' \times  \iiint |s^r u(x/s, y')|^2 \frac{ds}{s} \frac{dx}{x}dy \, dy' .
        \]
Integrating out $s$ yields a uniform bound by the square of $\|u\|_{L^2_{\text{b}}}$, completing the proof.

\end{proof}

\newpage

\noindent
The mapping property we are looking for is Theorem 5.34. This paper is basically presenting a special case for Theorem 5.34.

\vspace{1em}
\noindent
The next theorem is Theorem 5.40 from \cite{Mel93}.

\begin{theorem} If $P \in \operatorname{Diff}_{\text{b}}^k(X; {}^b\Omega^{\frac{1}{2}})$ is elliptic on a compact manifold with boundary $X$ then for each $\alpha \in \mathbb{R}$ and $M \in \mathbb{R}$ the null space
\begin{equation*}
\left\{u \in x^\alpha H_{b}^M(X; {}^b\Omega^{\frac{1}{2}}); Pu = 0\right\} \subset x^\alpha H_{b}^\infty(X; {}^b\Omega^{\frac{1}{2}})
\end{equation*}
\begin{center}
is finite dimensional
\end{center}
and
\begin{equation*}
\begin{aligned}
P: x^\alpha H_{b}^M(X; {}^b\Omega^{\frac{1}{2}}) &\longrightarrow x^\alpha H_{b}^{M-k}(X; {}^b\Omega^{\frac{1}{2}}) \text{ is Fredholm} \\
&\Longleftrightarrow \alpha \notin -\operatorname{Im}\operatorname{spec}_{\text{b}}(P).
\end{aligned}
\end{equation*}

\end{theorem}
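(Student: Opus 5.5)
The plan is to reduce everything to the unweighted case and then build a parametrix in the full $b$-calculus whose error is compact exactly when $\alpha\notin-\operatorname{Im}\operatorname{spec}_{\text{b}}(P)$.

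\textbf{Step 1: reduction to $\alpha=0$.} Conjugating by $x^{\alpha}$ gives $P_\alpha:=x^{-\alpha}Px^{\alpha}\in\operatorname{Diff}_{\text{b}}^k$, which is still elliptic. Its indicial family is $I(P_\alpha)(\sigma)=I(P)(\sigma-i\alpha)$, so $\operatorname{spec}_{\text{b}}(P_\alpha)=\operatorname{spec}_{\text{b}}(P)+i\alpha$. Multiplication by $x^\alpha$ is an isometry $H^M_b\to x^\alpha H^M_b$. Hence it suffices to show the following for $\alpha=0$:
\begin{itemize}
\item the null space in $H^M_b$ lies in $H^\infty_b$ and is finite dimensional;
\item $P$ is Fredholm if and only if $\operatorname{spec}_{\text{b}}(P)\cap\mathbb{R}=\emptyset$.
\end{itemize}

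\textbf{Step 2: regularity from the small calculus.} Ellipticity of the $b$-symbol lets me invert the principal symbol and iterate symbolically. This gives $G\in\Psi_{\text{b}}^{-k}$ with $GP=I-R_1$ and $PG=I-R_2$, where $R_i\in\Psi_{\text{b}}^{-\infty}$ lie in the small calculus. By Theorem 5.34 (applied to elements of the small calculus, whose kernels vanish to infinite order at lb and rb), $G$ and the $R_i$ are bounded on $x^\alpha H^M_b$ for every $M$ and $\alpha$. Consequently $Pu=0$ with $u\in x^\alpha H^M_b$ gives $u=R_1u\in x^\alpha H^\infty_b$, which is the first inclusion. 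The $R_i$ are not compact, however, because their kernels do not vanish at ff. Removing that obstruction is the real content of the theorem.

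\textbf{Step 3: the full parametrix when $\operatorname{spec}_{\text{b}}(P)\cap\mathbb{R}=\emptyset$.} This is the step I expect to be hardest. The restriction of $R_1$ to ff is a convolution operator in $\tau=\log s$. To remove it, I solve the normal-operator equation on the model cylinder exactly as in Section 4: invert $I(P)(\sigma)$ on the real line and take the inverse Mellin/Fourier transform, which is legitimate because no root lies on the contour. The argument of the residue computation in Section 4 then gives a kernel on ff, polyhomogeneous at lb and rb with index sets $E_{\text{lb}}>0$ and $E_{\text{rb}}>0$ read off from the roots above and below the line. Here a general $P$ may have higher-order poles, so logarithmic terms can appear; this does not affect the argument. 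I extend this kernel off ff and add it to $G$ to get $G'\in\widetilde\Psi_{\text{b}}^{-k,\mathcal{E}}$. Then
\[
G'P=I-R_1',\qquad R_1'\in x^{\epsilon}\,\widetilde\Psi_{\text{b}}^{-\infty,\mathcal{E}},
\]
and likewise on the right. The delicate points are:
\begin{itemize}
\item checking the composition formula for $G'P$ in the full calculus, including that the index sets survive;
\item showing that the error now vanishes at ff.
\end{itemize}
Granting these, $R_1'$ maps $L^2_b\to x^{\epsilon'}H^N_b$ for some $\epsilon'>0$ and any $N$, by Theorem 5.34 with $\beta>0$. The inclusion $x^{\epsilon'}H^N_b\hookrightarrow L^2_b$ is compact by Rellich on the compact manifold $X$ combined with decay in $x$. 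So $R_1'$ and $R_2'$ are compact, and $P$ is Fredholm $H^M_b\to H^{M-k}_b$.

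\textbf{Step 4: the remaining claims.} Finite dimensionality of the null space for arbitrary $\alpha$ is proved as follows.
\begin{itemize}
\item $\operatorname{spec}_{\text{b}}(P)$ is discrete with only finitely many points in each horizontal strip, so there is $\alpha'<\alpha$ with $\alpha'\notin-\operatorname{Im}\operatorname{spec}_{\text{b}}(P)$.
\item The null space in $x^\alpha H^M_b$ is contained in the null space in $x^{\alpha'}H^M_b$.
\item The latter is finite dimensional by Step 3.
\end{itemize}

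For the converse, suppose $\alpha=0$ and there is $\sigma_0\in\operatorname{spec}_{\text{b}}(P)\cap\mathbb{R}$ with $I(P)(\sigma_0)v=0$, $v\neq0$. Define
\[
u_n=n^{-1/2}\,\chi\!\left(\frac{\log x}{n}\right)x^{i\sigma_0}v(y),
\]
where $\chi$ is supported in $[-2,-1]$, so that $u_n$ lives on $-2n\le\log x\le -n$.
\begin{itemize}
\item The $u_n$ are normalized in $L^2_b$ and converge weakly to $0$.
\item Their supports run off to $x=0$, where $P$ agrees with its indicial operator up to $O(x)$.
\item Derivatives falling on the cutoff cost $1/n$, so $\|Pu_n\|_{H^{M-k}_b}\to0$.
\end{itemize}
If $P$ were Fredholm, we would have the estimate $\|u\|\le C\|Pu\|+\|Ku\|$ with $K$ compact. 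Applying it to the $u_n$, the right-hand side tends to $0$ while $\|u_n\|=1$, a contradiction. This proves the characterization $\alpha\notin-\operatorname{Im}\operatorname{spec}_{\text{b}}(P)$.
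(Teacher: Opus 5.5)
Your proposal is correct. For sufficiency and finite dimensionality it follows the paper's argument:
\begin{itemize}
\item left and right parametrices whose remainders land in $x^{\alpha+\epsilon}H^\infty_b$ and are therefore compact;
\item $\ker P\subset\ker(\mathrm{Id}-R_L)$;
\item extension to arbitrary $\alpha$ by passing to a smaller weight $\alpha'\notin-\operatorname{Im}\operatorname{spec}_{\text{b}}(P)$.
\end{itemize}

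You differ from the paper on the two remaining claims. Your Step 2 (small-calculus parametrix, $u=R_1u$) actually proves that the null space lies in $x^\alpha H^\infty_b$, which the paper states but never argues.

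For necessity, the paper defers to the relative index formula (Theorem 6.5 of Melrose), which says the index of $P$ jumps when $\alpha$ crosses a point of $-\operatorname{Im}\operatorname{spec}_{\text{b}}(P)$. You instead build approximate solutions from $x^{i\sigma_0}v$, cut off far out in the end. Your route is self-contained and needs only the decomposition $P=I_0+xP'$ near $\partial X$, not the full calculus. It also gives a stronger conclusion: no estimate $\|u\|\le C\|Pu\|+\|Ku\|$ with $K$ compact can hold, so at such weights $P$ cannot even have closed range with finite-dimensional kernel. The relative index route costs more machinery but yields the quantitative index jump, which is what one needs afterwards.

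Two small repairs are needed.
\begin{itemize}
\item Theorem 5.34 as stated requires $\beta\le\alpha$, so you cannot apply it ``with $\beta>0$'' at $\alpha=0$. The weight gain must come from the vanishing at ff. Write $R_1'=x^\epsilon A$ and apply Theorem 5.34 to $A$ with $\alpha=\beta=0$. The left index set of $A$ is shifted to $\mathcal{E}_{\text{lb}}-\epsilon$, which is still positive for small $\epsilon$.
\item In the converse you should normalize $u_n$ in $H^M_b$ rather than $L^2_b$. The $H^M_b$ norms of the $u_n$ are uniformly bounded above and below; for $M<0$ use $\|u_n\|_{L^2_b}^2\lesssim\|u_n\|_{H^M_b}\|u_n\|_{H^{-M}_b}$. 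So the argument is unaffected.
\end{itemize}
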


\begin{proof}
The proof establishes that the $b$-differential operator $P$ is Fredholm if and only if $\alpha \notin -\operatorname{Im}\operatorname{spec}_{\text{b}}(P)$, and that its null space is finite-dimensional. 

First, we assume the sufficiency condition. Start by assuming the weight condition $\alpha \notin -\operatorname{Im}\operatorname{spec}_{\text{b}}(P)$ holds.
To construct parametrix, we utilize the existence of right parametrix and left parametrix, $E_R$ and $E_L$, such that $P \circ E_R = \operatorname{Id} - R_R$ and $E_L \circ P = \operatorname{Id} - R_L$. Then we can establish compactness of remainders. Identify that the remainder terms $R_R$ and $R_L$ are compact operators because they map into spaces with strictly improved regularity and weight ($x^{\alpha+\epsilon}H_{b}^\infty$).

Now we prove finite-dimensional null space. Observe that the null space of $P$ is contained within the null space of $\operatorname{Id} - R_L$. Because $R_L$ is compact, $\operatorname{Id} - R_L$ has a finite-dimensional null space, which forces the null space of $P$ to be finite-dimensional as well.
So we can generalize the null space property by extending this finite-dimensionality to arbitrary $\alpha$ by noting that the null space can only increase as the weight $\alpha$ decreases.

We also need to prove closed range and finite codimension, showing that the range of $P$ contains the range of $\operatorname{Id} - R_R$. Since $R_R$ is compact, the operator $\operatorname{Id} - R_R$ has a closed range of finite codimension. This implies $P$ also has a closed range of finite codimension, completing the proof that $P$ is Fredholm under the sufficiency condition.

Finally, we defer the necessity condition. The proof that the weight condition is necessary for $P$ to be Fredholm is deferred, as it is a direct consequence of the relative index formula proven later in Theorem 6.5.

\end{proof}

From those mapping properties, we can get the elliptic estimates.

\newpage

\subsection{Mapping Properties of Parametrix}

Applying Melrose's theorems directly to our shifted operator $\Delta_g + 1$ rigorously confirms the boundedness of our parametrix and guarantees the Fredholm property on the unweighted space. Because of the $+1$ spectral gap, our project fits the conditions of both theorems perfectly without requiring any complex contour shifting.

For Theorem 5.34 in \cite{Mel93}, this theorem establishes the precise mapping properties of our constructed parametrix $\mathbf{Q}$, which acts as the pseudodifferential operator $A$ in the theorem. The variable $x$ corresponds to our boundary defining function $x$. In our earlier analysis, the leading orders of vanishing at the left and right boundaries were strictly determined by the shifted root, meaning $\inf E_{\text{lb}} = 1$ and $\inf E_{\text{rb}} = 1$.

%lowest shifted root, meaning $\inf E_{\text{lb}} = 1$ and $\inf E_{\text{rb}} = 1$.

Substituting these values into the theorem's bounds yields the strict inequalities $\alpha > -1$ and $\beta < 1$. When mapping a space to itself with the same weight by setting $\beta = \alpha$, this confirms that our parametrix $\mathbf{Q}$ is a bounded operator on the weighted $b$-Sobolev space $x^\alpha H_b^M(M)$ for all weights safely within the open stability strip $\vert{}\alpha\vert{} < 1$.

We are using Theorem 5.40 in \cite{Mel93} to discuss the Fredholm property of $\Delta_g + 1$, this theorem provides the exact condition for our specific operator to be Fredholm. The theorem states $P$ is Fredholm if and only if the chosen weight $\alpha$ avoids the spectrum of the indicial family. In this notation, this means avoiding the specific shifted roots $\pm\sqrt{\mu_j^2 + 1}$.
Because the lowest cross-sectional eigenvalue is zero ($\mu_0^2 = 0$), the roots closest to zero are strictly located at $+1$ and $-1$. The weight $\alpha = 0$ sits squarely in the center of this gap. Therefore, the crucial condition $\alpha \notin -\text{Im spec}_b(P)$ is automatically satisfied for the unweighted space where $\alpha = 0$.

\begin{theorem}
For any weight $\alpha$ that is not in the set of indicial roots, the operator
\[
\Delta_g + 1 : x^\alpha H_b^{s+2}(M) \to x^\alpha H_b^s(M)
\]
is a Fredholm operator. Consequently, the space of solutions to the homogeneous equation
\[
(\Delta_g + 1)u = 0
\]
in $x^\alpha H_b^{s+2}(M)$ is finite-dimensional. 
\end{theorem}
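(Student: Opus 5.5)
The plan is to reduce the statement to Theorem 5.40 of \cite{Mel93}, quoted above, applied with $P = \Delta_g + 1$ and $k = 2$. I will check its hypotheses in three steps.

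First, $P$ must be an elliptic element of $\operatorname{Diff}^2_b(\overline{M})$. On the cylindrical end, with $x = e^{-t}$, we have $\partial_t = -x\partial_x$. The metric $g$ is exponentially close to $dt^2 + h$, that is, $g = \frac{dx^2}{x^2} + h + \mathcal{O}(x)$ as a smooth symmetric form in the $b$-cotangent bundle. Hence $\Delta_g + 1$ is a polynomial in $x\partial_x$ and $\partial_y$ with coefficients smooth up to $x=0$, with principal $b$-symbol $|\xi|^2_{g}$, which is elliptic. On the compact core $K$ this is ordinary ellipticity. Adding the constant $1$ changes neither membership in $\operatorname{Diff}^2_b$ nor the principal symbol. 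The only point needing care is the smoothness of the coefficients at $x=0$. Exponential closeness $\mathcal{O}(e^{-t}) = \mathcal{O}(x)$ gives this, provided the error is assumed smooth in $x$; I will state that as the standing interpretation of ``exponentially close''.

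Second, I identify $\operatorname{spec}_b(P)$. The indicial operator computed earlier is $I_{+1}(\lambda) = -\lambda^2 + \Delta_h + 1$. In the convention $\lambda = i\sigma$ it is $\sigma^2 + \Delta_h + 1$. This fails to be invertible exactly when $\sigma = \pm i\sqrt{\mu_j^2+1}$, so
\[
-\operatorname{Im}\operatorname{spec}_b(\Delta_g+1) = \{\pm\sqrt{\mu_j^2+1} : j \in \mathbb{N}_0\}.
\]
This is precisely the set of indicial roots in the statement. The hypothesis ``$\alpha$ is not an indicial root'' is therefore literally the condition $\alpha \notin -\operatorname{Im}\operatorname{spec}_b(P)$ of Theorem 5.40. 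Since the set is symmetric under sign change, the sign convention in Melrose's definition does not affect the conclusion.

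Third, I apply Theorem 5.40 to conclude that $P : x^\alpha H^{s+2}_b \to x^\alpha H^s_b$ is Fredholm for every such $\alpha$. I state it for $M = s+2$; the theorem is for all real $M$, so non-integer $s$ is allowed. The finite dimensionality of the kernel is part of that theorem. It also follows from the Fredholm property, since the solution space of $(\Delta_g+1)u = 0$ in $x^\alpha H^{s+2}_b$ is the null space.

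To make the argument less of a black box, I would also sketch the mechanism behind Theorem 5.40 for this operator. On the front face, take the model parametrix from the preceding proposition, integrating along the line $\operatorname{Im}\sigma = -\alpha$ in place of the real axis; this line is free of poles exactly when $\alpha$ is not an indicial root. Patch it with an interior elliptic parametrix on $K$. The resulting operator $Q$ satisfies $PQ = I - R_R$ and $QP = I - R_L$, where the remainders map $x^\alpha H^s_b$ into $x^{\alpha+\epsilon}H^\infty_b$. The gain $x^\epsilon$ comes from the $\mathcal{O}(x)$ metric error and from the vanishing of the normal operator of the remainder. The index-set conditions of Theorem 5.34 give boundedness of $Q$ and of the remainders. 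The gain of both weight and regularity, combined with compactness of $\overline{M}$, makes $R_L$ and $R_R$ compact via the Rellich-type embedding $x^{\alpha+\epsilon}H^{N}_b \hookrightarrow x^\alpha H^{s}_b$ for $N > s$. Fredholmness then follows by the standard argument.

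The main obstacle is the off-axis case $|\alpha| > 1$ lying between two roots. There the index sets are no longer symmetric, $\inf E_{\text{rb}}$ and $\inf E_{\text{lb}}$ shift by the finitely many poles crossed, and one must check that the conditions of Theorem 5.34 still hold. This is where I would rely directly on Melrose's theorem rather than redo the residue bookkeeping.
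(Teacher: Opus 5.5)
Your proposal is correct and follows the same route as the paper. Both reduce the statement to Melrose's Theorem 5.40 with $P=\Delta_g+1$, $k=2$, identify $-\operatorname{Im}\operatorname{spec}_b(\Delta_g+1)$ with the real, sign-symmetric set $\{\pm\sqrt{\mu_j^2+1}\}$ of indicial roots, and read off finite-dimensionality of the null space from that theorem.

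Two of your additions go slightly beyond the paper. Your explicit check that $\Delta_g+1$ is an elliptic element of $\operatorname{Diff}^2_b(\overline{M})$, assuming the $\mathcal{O}(x)$ metric error is smooth in $x$, is care the paper leaves implicit. The ``obstacle'' you flag for $|\alpha|>1$ is not a real one: once the Mellin contour sits on the line determined by $\alpha$, the rb and lb expansions pick up only poles lying strictly on opposite sides of it. Hence $\alpha+\inf E_{\mathrm{rb}}>0$ and $\inf E_{\mathrm{lb}}>\alpha$ in Theorem 5.34 hold automatically.
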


\newpage

\section*{Acknowledgements}
\addcontentsline{toc}{section}{Acknowledgements}
I would like to thank my advisor Xuwen Zhu for introducing me to the field of microlocal analysis and for her constant encouragement. Her guidance was pivotal in shaping this research direction, and her insightful discussions were instrumental to the completion of this work. I would also like to thank Zhenhao Li for his inspiring suggestions and generous assistance during the preparation of this paper.

\section*{References}
\addcontentsline{toc}{section}{References}

\begingroup
\renewcommand{\refname}{}

\endgroup

\end{document}